  \newcommand{\textcyr}[1]{%
   {\fontencoding{OT2}\fontfamily{wncyr}\fontseries{m}\fontshape{n}%
     \selectfont #1}}
\newcommand{\Sha}{{\mbox{\textcyr{Sh}}}}
\newcommand{\Cok}{{\mbox{\textcyr{B}}}}
\newcommand{\ssstyle}{\scriptscriptstyle}
\newcommand{\Z}{\mathbb{Z}}
\newcommand{\isom}{\cong}
\renewcommand{\a}{\mathfrak{a}}
\DeclareMathOperator{\Am}{Am}
\DeclareMathOperator{\coker}{coker}
\DeclareMathOperator{\Br}{Br}
\DeclareMathOperator{\Nm}{Nm}
\DeclareMathOperator{\ord}{ord}
\DeclareMathOperator{\st}{st}
\DeclareMathOperator{\Cl}{Cl}
\theoremstyle{plain}
\newtheorem{theorem}{Theorem}[section]
\newtheorem{proposition}[theorem]{Proposition}
\newtheorem{corollary}[theorem]{Corollary}
\newtheorem{lemma}[theorem]{Lemma}
\theoremstyle{definition}
\theoremstyle{remark}
\numberwithin{equation}{section}
\newcommand{\thetitle}
{Capitulation, unit groups, and the cohomology of $S$-id\`{e}le classes}
      \def\@setcopyright{}
      \def\serieslogo@{}
\begin{document}

\title{\thetitle} 
\author{Saikat Biswas}
\address{School of Mathematical and Statistical Sciences, Arizona State University,
Tempe, AZ}
\email{Saikat.Biswas@asu.edu}

\begin{abstract}
Let $L/K$ be a finite, cyclic extension of number fields with Galois group $G$, and let $S$ be a finite set of primes of $K$ that includes all the infinite primes. In this paper, we study the $G$-cohomology of the $S$-id\`{e}le classes of $L$ and relate it to the $S$-capitulation map $j_{\ssstyle{L/K,S}}:\Cl_{K,S}\to\Cl_{L,S}^G$ as well as to the $G$-cohomology of the $S$-unit group $U_{L,S}$.
\end{abstract}

\subjclass{Primary 11R37; Secondary 11R34, 11S25}

\keywords{capitulation, ideals, id\`{e}les, units}

\maketitle


\section{Introduction}

Consider a finite, Galois extension $L/K$ of number fields having Galois group $G$. Let $S$ be a finite set of primes of $K$ containing the set $S_{\infty}$ of all infinite primes of $K$. We also denote by $S$ the set of primes of $L$ that divide the primes of $K$ contained in $S$. Let $U_{K,S}$ and $\Cl_{K,S}$ denote, respectively, the $S$-unit group and the $S$-ideal class group of $K$, and let $U_{L,S}$ and $\Cl_{L,S}$ denote the corresponding groups of $L$. The extension of ideals from $K$ to $L$ induces the \emph{$S$-capitulation map} 
$$j_{\ssstyle{L/K,S}}:\Cl_{K,S}\to\Cl_{L,S}^{G}$$
The kernel of $j_{\ssstyle{L/K,S}}$, also called the \emph{$S$-capitulation kernel} of $L/K$, corresponds to $S$-ideal classes in $K$ that become principal (i.e. capitulate) in $L$. On the other hand, the ideal classes in $\Cl_{L,S}^{G}$ are also known as the \emph{ambiguous $S$-ideal classes} in $L$. Thus the cokernel of $j_{\ssstyle{L/K,S}}$ classifies the ambiguous $S$- ideal classes of $L$, up to equivalence, that do not arise from the $S$-ideal classes in $K$.When $S=S_{\infty}$, we usually drop it from the notation.

The forerunner of all results on capitulation is Hilbert's Theorem 94 \cite{hilbert} according to which if $L/K$ is a finite, cyclic, and unramified extension of number fields, then the degree $[L:K]$ divides $\ker{j_{\ssstyle{L/K}}}$, i.e. there are at least $[L:K]$ ideal classes in $K$ that capitulate in $L$. This eventually led to the classical Principal Ideal Theorem of class field theory, conjectured by Hilbert and proved by F\"{u}rtwangler, which states that every ideal class in $K$ capitulates in the Hilbert class field of $K$. We refer to \cite[Thm II.5.8.3]{gras} for a generalized version of this theorem. Miyake conjectured in \cite{miya} that if $L/K$ is an Abelian, everywhere unramified (including infinity) extension then at least $[L:K]$ ideal classes in $K$ capitulate in $L$. This assertion, which generalizes both Hilbert's Theorem 94 and the Principal Ideal Theorem, was proved by Suzuki in \cite{suzuki} and was further generalized by Gruenberg and Weiss in \cite{gw}. We refer to \cite{schmit}, \cite{gras}, and \cite{miya} for a survey of results pertaining to the capitulation problem. While most of these results deal predominantly with the kernel of the capitulation map, the corresponding results about the cokernel of the capitulation map seem to be somewhat sparse with the possible exception of \cite{gon} which presents results on the cokernel of the $S$-capitulation map.

In this paper, we first present an id\`{e}le-theoretic interpretation for the kernel and cokernel of the $S$-capitulation map. Let $I_{K,S}$ be the $S$-id\`{e}le group of $K$. The group $I_{K,S}$ contains the group $U_{K,S}$, and the quotient $C_{K,S}:=I_{K,S}/U_{K,S}$ is called the group of $S$-id\`{e}le classes of $K$. Similarly, we can define the group $I_{L,S}$ as well as the quotient group $C_{L,S}=I_{L,S}/U_{L,S}$. There is a $G$-module structure on both $I_{L,S}$ and $U_{L,S}$, which in turn induces a $G$-module structure on $C_{L,S}$. However, there is a failure of Galois descent on $C_{L,S}$ since $C_{K,S}$ is not always the fixed module $C_{L,S}^{G}$. We prove that

\begin{theorem}\label{mt1}
Suppose $L/K$ is a finite, cyclic extension of number fields with Galois group $G$. Then there are isomorphisms
\begin{align*}
\ker{j_{\ssstyle{L/K,S}}} &\isom C_{L,S}^{G}/C_{K,S}\\
\coker{j_{\ssstyle{L/K,S}}} &\isom H^1(G,C_{L,S})
\end{align*}
In particular, the $S$-capitulation kernel of $L/K$ measures the failure of Galois descent for $C_{L,S}$, while $H^1(G,C_{L,S})$ measures the failure of Galois descent for $\Cl_{L,S}$ when Galois descent holds for $C_{L,S}$.
\end{theorem}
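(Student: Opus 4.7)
The plan is to derive both isomorphisms from the long exact cohomology sequence attached to a single short exact sequence of $G$-modules relating $C_{L,S}$, the full id\`{e}le class group $C_L = I_L/L^{*}$, and $\Cl_{L,S}$. First I would construct the exact sequence
\[
0 \to C_{L,S} \to C_L \to \Cl_{L,S} \to 0
\]
by applying the nine lemma to the $3\times 3$ diagram whose columns are the defining sequences $0\to U_{L,S}\to I_{L,S}\to C_{L,S}\to 0$, $0\to L^{*}\to I_L\to C_L\to 0$, and $0\to P_{L,S}\to J_L^S\to \Cl_{L,S}\to 0$ (with $P_{L,S}$ the group of principal fractional ideals of $L$ supported outside $S$ and $J_L^S$ the full group of such fractional ideals), and whose first two rows are the canonical sequences $0\to U_{L,S}\to L^{*}\to P_{L,S}\to 0$ and $0\to I_{L,S}\to I_L\to J_L^S\to 0$.

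Next I would take $G$-cohomology of this sequence, using two standard inputs from class field theory: the identification $(C_L)^{G} = C_K$, which follows from Hilbert's Theorem 90 applied to the idelic sequence $0\to L^{*}\to I_L\to C_L\to 0$, and the vanishing $H^1(G,C_L)=0$, which is Tate's theorem (the id\`{e}le-class analogue of Hilbert 90, valid for any finite Galois extension of number fields). With these two facts in hand, the long exact sequence collapses to the four-term exact sequence
\[
0 \to C_{L,S}^{G} \to C_K \to \Cl_{L,S}^{G} \to H^1(G, C_{L,S}) \to 0.
\]

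The final step is to identify the middle arrow. The composite $C_K \to \Cl_{L,S}^{G}$ factors through the natural surjection $\pi\colon C_K \twoheadrightarrow \Cl_{K,S}$ followed by $j_{\ssstyle{L/K,S}}$, and $\ker\pi = C_{K,S}$. Hence $\ker(C_K \to \Cl_{L,S}^{G}) = \pi^{-1}(\ker j_{\ssstyle{L/K,S}})$ and $\im(C_K \to \Cl_{L,S}^{G}) = \im j_{\ssstyle{L/K,S}}$, which together with exactness of the four-term sequence yield $C_{L,S}^{G}/C_{K,S} \isom \ker j_{\ssstyle{L/K,S}}$ and $\coker j_{\ssstyle{L/K,S}} \isom H^1(G,C_{L,S})$ respectively. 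The main obstacle is the input $H^1(G,C_L) = 0$: this is a genuinely substantive theorem of global class field theory rather than anything recoverable by a diagram chase, and the rest of the argument is essentially formal once that ingredient is available.
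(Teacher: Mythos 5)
Your proof is correct and is essentially the paper's own argument: both take $G$-cohomology of the short exact sequence $1\to C_{L,S}\to C_L\to\Cl_{L,S}\to 1$, feed in the two class-field-theoretic inputs $C_L^G=C_K$ and $H^1(G,C_L)=1$, and then identify the middle arrow of the resulting four-term exact sequence with $j_{\ssstyle{L/K,S}}$ precomposed with the projection $C_K\twoheadrightarrow C_K/C_{K,S}\isom\Cl_{K,S}$. The only difference is bookkeeping: you construct the initial short exact sequence via the nine lemma on a $3\times 3$ diagram, whereas the paper obtains it from Theorem \ref{t2}, which rests on a snake-lemma computation.
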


It follows immediately that

\begin{corollary}
The index $[C_{L,S}^{G}:C_{K,S}]$ divides $h_{K,S}$, where 
$$h_{K,S}:=[\Cl_{K,S}]$$ 
is the $S$-class number of $K$.
\end{corollary}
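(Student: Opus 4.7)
The plan is to derive the corollary as an immediate consequence of the first isomorphism in Theorem \ref{mt1}. That isomorphism identifies the quotient $C_{L,S}^{G}/C_{K,S}$ with $\ker j_{\ssstyle{L/K,S}}$, which is by definition a subgroup of $\Cl_{K,S}$.

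First I would invoke Theorem \ref{mt1} to write
\[
[C_{L,S}^{G}:C_{K,S}] = |\ker j_{\ssstyle{L/K,S}}|.
\]
Next, since $\Cl_{K,S}$ is a quotient of the ideal class group of $K$ and is therefore finite of order $h_{K,S}$, Lagrange's theorem applied to the subgroup $\ker j_{\ssstyle{L/K,S}} \subseteq \Cl_{K,S}$ gives that $|\ker j_{\ssstyle{L/K,S}}|$ divides $h_{K,S}$. Combining the two displayed facts yields the desired divisibility.

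There is really no obstacle here beyond the appeal to Theorem \ref{mt1}: once the kernel of $j_{\ssstyle{L/K,S}}$ has been identified with the finite quotient $C_{L,S}^{G}/C_{K,S}$, the divisibility assertion is just the statement that the order of a subgroup of a finite group divides the order of the ambient group. The only thing worth remarking on is that the quotient $C_{L,S}^{G}/C_{K,S}$, a priori a subquotient of the (infinite) id\`{e}le class group, is forced to be finite by this identification — a small but non-trivial consequence of the theorem.
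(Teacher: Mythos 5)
Your proposal is correct and matches the paper's own argument: the corollary is stated there as an immediate consequence of Theorem \ref{mt1}, exactly as you argue, since the isomorphism identifies $C_{L,S}^{G}/C_{K,S}$ with $\ker j_{\ssstyle{L/K,S}}$, a subgroup of the finite group $\Cl_{K,S}$ (finiteness being Lemma \ref{lem1}), so Lagrange's theorem gives the divisibility. Your closing remark that the finiteness of $C_{L,S}^{G}/C_{K,S}$ is itself a consequence of the identification is a fair observation, implicit in the paper as well.
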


Let $v$ be any prime of $K$ and $w$ be a prime of $L$ that divides $v$. Let $K_v$ and $L_w$ be the completions of $K$ and $L$ at the corresponding primes, and let $U_v$ and $U_w$ be the unit groups of $K_v$ and $L_w$, respectively. Let $G_w$ be the decomposition group of $w$ over $K$, also considered as the Galois group of $L_w/K_v$. There is a $G_w$-module structure on both $L_w^{\times}$ and $U_w$, so that the groups $H^i(G_w,L_w^{\times})$ and $H^i(G_w,U_w)$ are defined.
The group $U_{L,S}$ has the structure of a $G$-module, and we consider the localization maps
$$\lambda_{S}^i\;:\;H^i(G,U_{L,S}) \longrightarrow\left( \bigoplus_{v\in S}H^i(G_w,L_w^{\times})\times\bigoplus_{v\notin{S}}H^i(G_w,U_w)\right)$$

We define the groups $\Sha_{S}^i(G,U_{L,S})$ and $\Cok_{S}^i(G,U_{L,S})$ by 
\begin{align*}
\Sha_{S}^i(G,U_{L,S}) &=\ker{\lambda_{S}^i}\\
\Cok_{S}^i(G,U_{L,S}) &=\coker{\lambda_{S}^i}
\end{align*}

In this paper, we present results pertaining to $\Sha_{S}^i(G,U_{L,S})$ and $\Cok_{S}^i(G,U_{L,S})$ for $i=1,2$. To begin with, we show that

\begin{theorem}\label{mt2}
There are isomorphisms
\begin{align*}
\Sha_{S}^1(G,U_{L,S}) &\isom C_{L,S}^{G}/C_{K,S}\\
\Sha_{S}^2(G,U_{L,S}) &\isom\left(U_{K,S}\cap\Nm(L^{\times})\right)/\Nm(U_{L,S})
\end{align*}
where $U_{K,S}\cap\Nm(L^{\times})$ is the subgroup of $S$-units in $K$ that are norms of elements of $L$.
\end{theorem}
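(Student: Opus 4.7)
The plan is to derive both isomorphisms from the long exact $G$-cohomology sequence attached to the short exact sequence of $G$-modules
\begin{equation*}
1 \longrightarrow U_{L,S} \longrightarrow I_{L,S} \longrightarrow C_{L,S} \longrightarrow 1.
\end{equation*}
Since $I_{L,S}$ is built from the local modules $L_w^{\times}$ (at $v \in S$) and $U_w$ (at $v \notin S$), and cohomology of the finite group $G$ commutes with products, Shapiro's lemma identifies
\begin{equation*}
H^i(G, I_{L,S}) \isom \bigoplus_{v \in S} H^i(G_w, L_w^{\times}) \times \bigoplus_{v \notin S} H^i(G_w, U_w),
\end{equation*}
and under this identification the connecting map $H^i(G, U_{L,S}) \to H^i(G, I_{L,S})$ is precisely the localization map $\lambda_S^i$. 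Consequently $\Sha_S^i(G, U_{L,S})$ equals the kernel of this connecting map.

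For $i = 1$, the low-degree portion of the long exact sequence reads
\begin{equation*}
1 \to U_{K,S} \to I_{K,S} \to C_{L,S}^{G} \to H^1(G, U_{L,S}) \to H^1(G, I_{L,S}),
\end{equation*}
and the image of $I_{K,S}$ in $C_{L,S}^{G}$ is the subgroup $I_{K,S}/U_{K,S} = C_{K,S}$. Exactness therefore yields $\Sha_S^1(G, U_{L,S}) \isom C_{L,S}^{G}/C_{K,S}$.

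For $i = 2$, the hypothesis that $G$ is cyclic becomes essential: periodicity of Tate cohomology identifies $H^2$ with $\hat{H}^0$, so $H^2(G, U_{L,S}) \isom U_{K,S}/\Nm(U_{L,S})$, $H^2(G_w, L_w^{\times}) \isom K_v^{\times}/\Nm(L_w^{\times})$, and $H^2(G_w, U_w) \isom U_v/\Nm(U_w)$; under these identifications $\lambda_S^2$ simply sends the class of $u \in U_{K,S}$ to the tuple of its images in each local quotient. A short local computation comparing valuations across $\Nm\colon L_w^{\times} \to K_v^{\times}$ shows that $U_v \cap \Nm(L_w^{\times}) = \Nm(U_w)$, so an $S$-unit $u$ lies in $\ker \lambda_S^2$ precisely when $u$ is a local norm at \emph{every} prime of $K$, inside or outside $S$. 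The Hasse norm theorem for cyclic extensions then identifies the set of such $u$ with $U_{K,S} \cap \Nm(L^{\times})$, producing the desired isomorphism.

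The main obstacle is the last step: one must carefully translate the non-$S$ component conditions $u \in \Nm(U_w)$ into the local norm conditions $u \in \Nm(L_w^{\times})$ before Hasse's theorem can be invoked uniformly at all places. Once that bookkeeping is in place, everything else is the formal machinery of a long exact sequence together with Shapiro's lemma and Tate periodicity.
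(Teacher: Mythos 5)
Your argument for the first isomorphism coincides with the paper's (Theorem \ref{t7}): take the long exact $G$-cohomology sequence of $1\to U_{L,S}\to I_{L,S}\to C_{L,S}\to 1$, use $U_{L,S}^G=U_{K,S}$ and $I_{L,S}^G\isom I_{K,S}$ (Lemma \ref{l1}), identify $H^1(G,I_{L,S})$ semilocally (Theorem \ref{t3}) so that the induced map is $\lambda_S^1$, and read off $\ker\lambda_S^1\isom C_{L,S}^{G}/C_{K,S}$ from exactness. For the second isomorphism, however, your route is genuinely different. The paper (Theorem \ref{t9}) proceeds by counting: it combines the exact sequence (\ref{e2}), the identification $H^1(G,C_{L,S})\isom\coker j_{\ssstyle{L/K,S}}$ from Theorem \ref{t6}, the order formula for $\Cok_{S}^1(G,U_{L,S})$ (Theorem \ref{t8}), and the ambiguous $S$-class number formula (Theorem \ref{thm1}), obtaining $[\Sha_{S}^2(G,U_{L,S})]=[U_{K,S}\cap\Nm(L^{\times}):\Nm(U_{L,S})]$. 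You instead argue directly inside $\hat{H}^0$: periodicity turns $\lambda_S^2$ into the natural map $U_{K,S}/\Nm(U_{L,S})\to\bigoplus_{v\in S}K_v^{\times}/\Nm(L_w^{\times})\times\bigoplus_{v\notin S}U_v/\Nm(U_w)$, the valuation identity $\ord_v(\Nm(y))=f(w/v)\,\ord_w(y)$ gives $U_v\cap\Nm(L_w^{\times})=\Nm(U_w)$ at $v\notin S$, and the Hasse norm theorem (which requires cyclicity, as does everything here) converts ``local norm at every place'' into ``global norm''. Both proofs rest on the same class-field-theoretic input---$H^1(G,C_L)=1$ is what makes Hasse's theorem true and what drives Theorem \ref{t6}---so neither is more elementary, but they buy different things. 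Your route produces a canonical isomorphism, realized as an equality of subgroups of $U_{K,S}/\Nm(U_{L,S})$; the paper's computation, taken literally, yields only an equality of orders (the canonical map is implicit in its Lemma \ref{lem2}), so yours is arguably the cleaner proof of the isomorphism as stated. The paper's route, on the other hand, generates the intermediate order formulas (Theorems \ref{t8} and \ref{thm1}) that it needs anyway for Theorems \ref{H94} and \ref{mt4}, so its detour costs nothing in context. Two small glosses in your write-up, neither fatal: the map $H^i(G,U_{L,S})\to H^i(G,I_{L,S})$ is induced by the inclusion $U_{L,S}\hookrightarrow I_{L,S}$, not a connecting homomorphism; and identifying $\lambda_S^2$ with the norm-residue map uses the (standard) compatibility of periodicity with restriction, namely that a generator of $\hat{H}^2(G,\Z)$ restricts to a generator of $\hat{H}^2(G_w,\Z)$.
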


The first isomorphism in Theorem \ref{mt2} implies that $C_{L,S}^{G}/C_{K,S}$ may be identified with the subgroup of $H^1(G,U_{L,S})$ that are locally trivial at all primes outside $S$. In particular, Galois descent holds for $C_{L,S}$ if and only the local-global principle holds for $U_{L,S}$.

As noted earlier, the ideal classes in $\Cl_{L,S}^{G}$ are also known as the \emph{ambiguous} $S$-ideal classes. If $\sigma$ is a generator of the cyclic group $G$, then an ambiguous ideal $\a\in\Cl_{L,S}^{G}$ is called \emph{strongly ambiguous} if ${\a}^{\sigma-1}$ is the group of $S$-ideals of $L$. Denoting by $\Am_{\st}(L/K,S)$ the group of strongly ambiguous $S$-ideal classes of $L$, it can be shown that the $S$-capitulation map induces the map
$$j'_{\ssstyle{L/K,S}}:\Cl_{K,S}\to\Am_{\st}(L/K,S)$$
It is known that $\ker{j'_{\ssstyle{L/K,S}}}\isom\ker{j_{\ssstyle{L/K,S}}}$. We prove, on the other hand, that

\begin{theorem}\label{mt3}
There is an isomorphism
$$\coker{j'_{\ssstyle{L/K,S}}}\isom\Cok_{S}^1(G,U_{L,S})$$
Moreover, there is an exact sequence
$$1 \to \Cok_{S}^1(G,U_{L,S}) \to H^1(G,C_{L,S}) \to \Sha_{S}^2(G,U_{L,S}) \to 1$$
\end{theorem}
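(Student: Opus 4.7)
The plan is to extract both statements from the $G$-cohomology long exact sequences of a few natural short exact sequences, tied together by a single $3\times 3$ commutative diagram. For the displayed four-term exact sequence I apply $G$-cohomology to $1 \to U_{L,S} \to I_{L,S} \to C_{L,S} \to 1$. By Shapiro's lemma applied to the induced-module decomposition of $I_{L,S}$, the groups $H^i(G,I_{L,S})$ coincide with the local sum $\bigoplus_{v\in S}H^i(G_w,L_w^\times)\oplus\bigoplus_{v\notin S}H^i(G_w,U_w)$ that defines the target of $\lambda_S^i$, so that the segment
$$H^1(G,I_{L,S}) \to H^1(G,C_{L,S}) \to H^2(G,U_{L,S}) \xrightarrow{\lambda_S^2} H^2(G,I_{L,S})$$
of the resulting long exact sequence reads directly as the claimed $1 \to \Cok_S^1(G,U_{L,S}) \to H^1(G,C_{L,S}) \to \Sha_S^2(G,U_{L,S}) \to 1$.

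For the isomorphism $\coker j'_{\ssstyle{L/K,S}}\cong\Cok_S^1(G,U_{L,S})$, I work inside the commutative $3\times 3$ diagram of $G$-modules with exact rows and columns
\[
\begin{CD}
1 @>>> U_{L,S} @>>> I_{L,S} @>>> C_{L,S} @>>> 1 \\
@. @VVV @VVV @VVV \\
1 @>>> L^\times @>>> I_L @>>> C_L @>>> 1 \\
@. @VVV @VVV @VVV \\
1 @>>> P_{L,S} @>>> \mathcal{I}_{L,S} @>>> \Cl_{L,S} @>>> 1
\end{CD}
\]
in which $P_{L,S}=L^\times/U_{L,S}$ is the group of principal $S$-ideals of $L$, $\mathcal{I}_{L,S}$ is the group of $S$-ideals, and $I_L,C_L$ are the full id\`ele and id\`ele class groups of $L$. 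The connecting map of the right-hand column, together with $C_L^G=C_K$ and the vanishing $H^1(G,C_L)=0$, yields the surjection $\delta:\Cl_{L,S}^G\twoheadrightarrow H^1(G,C_{L,S})$ underlying the isomorphism $\coker j_{\ssstyle{L/K,S}}\cong H^1(G,C_{L,S})$ of Theorem \ref{mt1}; under this identification $\coker j'_{\ssstyle{L/K,S}}$ becomes $\delta(\Am_{\st}(L/K,S))$. The bottom row, using $H^1(G,\mathcal{I}_{L,S})=0$ (Shapiro plus $H^1(G_w,\Z)=0$), gives a connecting map $\partial_{3}:\Cl_{L,S}^G\to H^1(G,P_{L,S})$ with kernel exactly $\Am_{\st}(L/K,S)$, and Hilbert 90 for the left column produces an injection $\partial^{1}:H^1(G,P_{L,S})\hookrightarrow H^2(G,U_{L,S})$.

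The conclusion will then come from the anti-commutativity of connecting maps in the $3\times 3$ diagram: the composite $\Cl_{L,S}^G \xrightarrow{\delta} H^1(G,C_{L,S}) \to H^2(G,U_{L,S})$ (the second arrow being the Row~1 connecting map, whose kernel is $\Cok_S^1(G,U_{L,S})$) agrees up to sign with $\partial^{1}\circ\partial_{3}$. Since $\partial^{1}$ is injective, a class $[\mathfrak{a}]\in\Cl_{L,S}^G$ satisfies $\delta([\mathfrak{a}])\in\Cok_S^1(G,U_{L,S})$ precisely when $\partial_{3}([\mathfrak{a}])=0$, i.e., precisely when $[\mathfrak{a}]\in\Am_{\st}(L/K,S)$, giving the desired isomorphism.

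The hardest part will be the careful bookkeeping around the anti-commutativity statement: although the principle is standard, I will need to verify the correct sign and the identification of which pairs of boundary maps in the $3\times 3$ diagram are being compared, either by a direct cocycle calculation or by appealing to a standard reference in homological algebra. A secondary technical point is the vanishing $H^1(G,C_L)=0$, for which I would combine global reciprocity (the injectivity of $H^2(G,L^\times)\hookrightarrow H^2(G,I_L)$) with local Hilbert 90 $H^1(G,I_L)=0$.
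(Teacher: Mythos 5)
Your proposal is correct, and its two halves relate differently to the paper. For the short exact sequence $1 \to \Cok_{S}^1(G,U_{L,S}) \to H^1(G,C_{L,S}) \to \Sha_{S}^2(G,U_{L,S}) \to 1$ you argue exactly as the paper does: take the long exact $G$-cohomology sequence of $1 \to U_{L,S} \to I_{L,S} \to C_{L,S} \to 1$, identify $H^i(G,I_{L,S})$ with the local sums via semilocal theory, and read off the relevant segment. For the isomorphism $\coker{j'_{\ssstyle{L/K,S}}}\isom\Cok_{S}^1(G,U_{L,S})$, however, your route is genuinely different. The paper proceeds by counting: it first proves $\Sha_{S}^2(G,U_{L,S})\isom\left(U_{K,S}\cap\Nm(L^{\times})\right)/\Nm(U_{L,S})$ via an order computation resting on the ambiguous $S$-class number formula, deduces $1 \to \Am_{\st}(L/K,S) \to \Am(L/K,S) \to \Sha_{S}^2(G,U_{L,S}) \to 1$ by comparison with Lemma \ref{lem2}, applies the snake-lemma corollary to the composite $\Cl_{K,S}\to\Am_{\st}(L/K,S)\to\Am(L/K,S)$ to get $1\to\coker{j'_{\ssstyle{L/K,S}}}\to\coker{j_{\ssstyle{L/K,S}}}\to\Sha_{S}^2(G,U_{L,S})\to 1$, and finally compares this with the sequence (\ref{e2}) through $\coker{j_{\ssstyle{L/K,S}}}\isom H^1(G,C_{L,S})$. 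Your $3\times 3$ diagram, together with anticommutativity of the two composites of connecting homomorphisms, instead identifies $\coker{j'_{\ssstyle{L/K,S}}}\isom\delta\left(\Am_{\st}(L/K,S)\right)$ directly with the subgroup $\ker\left(H^1(G,C_{L,S})\to H^2(G,U_{L,S})\right)=\Cok_{S}^1(G,U_{L,S})$ of $H^1(G,C_{L,S})$. This buys real advantages: it avoids Chevalley's formula and all finiteness/order arguments, it produces a canonical isomorphism, and it makes rigorous a step the paper leaves implicit---comparing two short exact sequences with isomorphic middle and right-hand terms yields an isomorphism of the left-hand terms only when the comparison square commutes, and your connecting-map identity is precisely that commutation. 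Regarding your flagged difficulties: the anticommutativity lemma for a short exact sequence of short exact sequences of $G$-modules is indeed standard (citable, or checkable by a cocycle computation), and the sign is immaterial for you since you only use equality of kernels; also, the vanishing $H^1(G,J_{L,S})=0$ you invoke for the bottom row is not needed, as $\ker\partial_{3}=\Am_{\st}(L/K,S)$ follows from exactness at $\Cl_{L,S}^{G}$ alone. One point worth making explicit in a final write-up is the containment $\im(j_{\ssstyle{L/K,S}})\subseteq\Am_{\st}(L/K,S)$ (extended ideals are $G$-invariant as ideals, not merely as classes), which simultaneously gives the existence of $j'_{\ssstyle{L/K,S}}$ and your identification $\ker\left(\delta|_{\Am_{\st}(L/K,S)}\right)=\im(j'_{\ssstyle{L/K,S}})$.
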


Thus, the group $\Cok_{S}^1(G,U_{L,S})$ classifies the strongly ambiguous $S$-ideal classes in $L$, up to equivalence, that do not arise from the $S$-ideal classes in $K$. Furthermore, it follows from Theorems \ref{mt1} and \ref{mt3} that the group $\Sha_{S}^2(G,U_{L,S})$ measures the difference between the cokernels of the maps $j_{\ssstyle{L/K,S}}$ and $j'_{\ssstyle{L/K,S}}$.

As for the group $\Cok_{S}^2(G,U_{L,S})$, we prove that

\begin{theorem}
There is an isomorphism
$$\Cok_{S}^2(G,U_{L,S})\isom C_{K,S}/\Nm(C_{L,S})$$
\end{theorem}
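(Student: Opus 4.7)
The approach is to derive the isomorphism from the long exact cohomology sequence attached to
$$1 \to U_{L,S} \to I_{L,S} \to C_{L,S} \to 1,$$
exactly as in the proofs of Theorems \ref{mt1}--\ref{mt3}. After taking $G$-cohomology and using Shapiro's lemma to identify $H^i(G, I_{L,S})$ with the direct sum $\bigoplus_{v\in S} H^i(G_w, L_w^{\times}) \times \bigoplus_{v\notin S} H^i(G_w, U_w)$ appearing in the definition of $\lambda_S^i$, one sees that $\lambda_S^2$ is the map $H^2(G, U_{L,S}) \to H^2(G, I_{L,S})$ induced by the inclusion $U_{L,S}\hookrightarrow I_{L,S}$, so that
$$\Cok_{S}^2(G, U_{L,S}) \isom \im\bigl(H^2(G, I_{L,S}) \to H^2(G, C_{L,S})\bigr).$$

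Since $G$ is cyclic, I would next invoke the periodicity isomorphism $H^2(G, M) \isom \hat{H}^0(G, M) = M^G/\Nm(M)$ together with the identifications $U_{L,S}^G = U_{K,S}$ and $I_{L,S}^G = I_{K,S}$ (the latter via Shapiro for $\hat{H}^0$). Under these, $\lambda_S^2$ becomes the map
$$U_{K,S}/\Nm(U_{L,S}) \longrightarrow I_{K,S}/\Nm(I_{L,S})$$
induced by inclusion, and therefore
$$\Cok_{S}^2(G, U_{L,S}) \isom I_{K,S}/\bigl(U_{K,S} \cdot \Nm(I_{L,S})\bigr).$$

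To match this with $C_{K,S}/\Nm(C_{L,S})$, I would apply the norm functorially to the short exact sequence above, obtaining a commutative diagram of short exact sequences relating the rows for $L$ and $K$. A straightforward diagram chase shows
$$\Nm(C_{L,S}) = \bigl(U_{K,S} \cdot \Nm(I_{L,S})\bigr)/U_{K,S}$$
inside $C_{K,S} = I_{K,S}/U_{K,S}$, whence the desired isomorphism follows by cancellation.

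The only real technical point is the identification of the cohomological norm $\Nm_G = \sum_{\sigma\in G}\sigma$ on $I_{L,S}$ with the classical id\`ele norm $\Nm_{L/K}: I_{L,S}\to I_{K,S}$; this is needed so that the Tate cohomology computation translates into the statement phrased in terms of the norm of $S$-id\`ele classes. It is a routine place-by-place verification via Shapiro's lemma, using that at each place $w_0 \mid v$ the algebraic norm $\Nm_G$ on the $v$-component of $I_{L,S}$ factors through $\Nm_{L_{w_0}/K_v}$ on the $w_0$-component. Beyond this, I do not anticipate any serious obstacle, since everything else is formal manipulation of the long exact sequence and Tate periodicity in the cyclic setting.
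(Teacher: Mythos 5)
Your proof is correct, but it follows a genuinely different route from the paper's. The paper deduces the theorem from its exact sequence (\ref{e3}), namely $1 \to \Cok_{S}^2(G,U_{L,S}) \to H^2(G,C_{L,S}) \to \Sha_{S}^1(G,U_{L,S}) \to 1$, together with Theorem \ref{t7}, and then counts orders: $[\Cok_{S}^2(G,U_{L,S})] = [C_{L,S}^{G}:\Nm(C_{L,S})]/[C_{L,S}^{G}:C_{K,S}] = [C_{K,S}:\Nm(C_{L,S})]$. You instead compute $\coker{\lambda_{S}^2}$ directly in degree-zero Tate cohomology: after the identifications $U_{L,S}^{G}=U_{K,S}$, $I_{L,S}^{G}=I_{K,S}$ and periodicity, $\lambda_{S}^2$ becomes the inclusion-induced map $U_{K,S}/\Nm(U_{L,S}) \to I_{K,S}/\Nm(I_{L,S})$, so $\Cok_{S}^2(G,U_{L,S}) \isom I_{K,S}/(U_{K,S}\Nm(I_{L,S}))$; norm functoriality applied to the two short exact sequences then gives $\Nm(C_{L,S}) = U_{K,S}\Nm(I_{L,S})/U_{K,S}$ inside $C_{K,S}$, and the result follows from the third isomorphism theorem. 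The comparison actually favors your version on one point of rigor: the paper's computation, as written, establishes only an equality of cardinalities of finite abelian groups, which by itself does not yield an isomorphism; to complete that argument one must further check that, under periodicity and Theorem \ref{t7}, the surjection $H^2(G,C_{L,S}) \to \Sha_{S}^1(G,U_{L,S})$ is the canonical projection $C_{L,S}^{G}/\Nm(C_{L,S}) \to C_{L,S}^{G}/C_{K,S}$, whose kernel is then $C_{K,S}/\Nm(C_{L,S})$. Your route produces a canonical isomorphism outright, bypasses (\ref{e3}) and Theorem \ref{t7} entirely, and yields the extra id\`ele-level description $\Cok_{S}^2(G,U_{L,S}) \isom I_{K,S}/(U_{K,S}\Nm(I_{L,S}))$; its only costs are the two standard compatibilities you correctly flag, namely the semilocal (Shapiro) identification of $\lambda_{S}^2$ with the map induced by $U_{L,S}\hra I_{L,S}$ (the paper's Lemma \ref{l2}) and the agreement of the cohomological norm $\sum_{\sigma\in G}\sigma$ on $I_{L,S}$ with the id\`ele norm, both of which the paper also uses, the latter implicitly.
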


We also compute the orders of $\Cok_{S}^i(G,U_{L,S})$ for $i=1,2$.

\begin{theorem}\label{mt4}
We have
\begin{align*}
[\Cok_{S}^1(G,U_{L,S})] &=\frac{[C_{L,S}^{G}:C_{K,S}]\;e_{\ssstyle{L/K,S}}}{[L:K][U_{K,S}:\Nm(U_{L,S})]}\\
[\Cok_{S}^2(G,U_{L,S})] &=[C_{K,S}:\Nm(C_{L,S})]\\
&=\frac{e_{\ssstyle{L/K,S}}}{[U_{K,S}:U_{K,S}\cap\Nm(L^{\times})]}
\end{align*}
where
$$e_{\ssstyle{L/K,S}}=\prod_{v\in{S}}n_v\;\prod_{v\notin{S}}e(v)$$
with $n_v=[L_w:K_v]$ and $e(v)$ the ramification index of $v$ in $L$.
\end{theorem}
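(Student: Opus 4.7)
\emph{Proof outline.} The plan is to derive both formulas from the single bookkeeping identity
\begin{equation*}
[\Cok_{S}^{i}(G,U_{L,S})] \cdot [H^i(G,U_{L,S})] = [\Sha_{S}^{i}(G,U_{L,S})] \cdot [H^i(G,I_{L,S})],
\end{equation*}
which reads off from the long exact $G$-cohomology sequence of $1\to U_{L,S}\to I_{L,S}\to C_{L,S}\to 1$ (Shapiro's lemma identifies the target of $\lambda_{S}^{i}$ with $H^i(G,I_{L,S})$). All four orders are finite for $i=1,2$. The $\Sha$ terms come from Theorem \ref{mt2}; the local orders $|H^i(G,I_{L,S})|$ are computed place by place by local class field theory; and the global orders $|H^i(G,U_{L,S})|$ are read off from the classical $S$-unit Herbrand quotient together with the identification $H^{2}(G,M)\cong M^G/\Nm(M)$ valid for cyclic $G$.

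For $i=2$, local class field theory gives $|H^2(G_w,L_w^\times)|=n_v$ for $v\in S$ and $|H^2(G_w,U_w)|=[U_v:\Nm(U_w)]=e(v)$ for $v\notin S$, so $|H^2(G,I_{L,S})|=e_{\ssstyle{L/K,S}}$. Cyclicity of $G$ gives $|H^2(G,U_{L,S})|=[U_{K,S}:\Nm(U_{L,S})]$. Substituting these together with $[\Sha_{S}^{2}]=[U_{K,S}\cap\Nm(L^{\times}):\Nm(U_{L,S})]$ into the identity, and telescoping along the chain $\Nm(U_{L,S})\subseteq U_{K,S}\cap\Nm(L^{\times})\subseteq U_{K,S}$, yields the claimed $e_{\ssstyle{L/K,S}}/[U_{K,S}:U_{K,S}\cap\Nm(L^{\times})]$. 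The first equality $[\Cok_{S}^{2}]=[C_{K,S}:\Nm(C_{L,S})]$ is simply the order of the isomorphism established just before the theorem.

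For $i=1$, Hilbert 90 kills the $v\in S$ contribution to $H^1(G,I_{L,S})$, while the local sequence $1\to U_w\to L_w^\times\to\Z\to 1$ (with $\Z$ carrying the trivial $G_w$-action) shows $|H^1(G_w,U_w)|=e(v)$ for $v\notin S$, hence $|H^1(G,I_{L,S})|=\prod_{v\notin S}e(v)=e_{\ssstyle{L/K,S}}/\prod_{v\in S}n_v$. The classical $S$-unit Herbrand quotient $h(G,U_{L,S})=\prod_{v\in S}n_v/[L:K]$ then gives $|H^1(G,U_{L,S})|=[L:K]\,[U_{K,S}:\Nm(U_{L,S})]/\prod_{v\in S}n_v$. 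Feeding these and $[\Sha_{S}^{1}]=[C_{L,S}^{G}:C_{K,S}]$ (from Theorem \ref{mt2}) into the identity, the factors $\prod_{v\in S}n_v$ cancel and leave exactly the stated expression.

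The only external input that is not already proved in the paper is Tate's computation of the $S$-unit Herbrand quotient; everything else reduces to bookkeeping with norm-index towers, where Theorem \ref{mt2}'s identifications do the heavy lifting. The only delicate point is to keep the chain $\Nm(U_{L,S})\subseteq U_{K,S}\cap\Nm(L^{\times})\subseteq U_{K,S}$ straight when telescoping, so no serious obstacle is anticipated.
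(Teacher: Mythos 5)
Your proposal is correct. For the first formula it is essentially the paper's own argument (Theorem \ref{t8}): the paper's exact sequence (\ref{e1}) is precisely your bookkeeping identity for $i=1$ after identifying $\Sha_{S}^1(G,U_{L,S})$ with $C_{L,S}^{G}/C_{K,S}$, and the inputs — local Hilbert 90, $[H^1(G_w,U_w)]=e(v)$, and the $S$-unit Herbrand quotient from Lang — are identical. For the second formula, however, you take a genuinely different route. The paper never applies the $i=2$ bookkeeping identity directly: it first proves $\Cok_{S}^2(G,U_{L,S})\isom C_{K,S}/\Nm(C_{L,S})$ from the sequence (\ref{e3}) together with Theorem \ref{t7}, and then evaluates $[C_{K,S}:\Nm(C_{L,S})]$ in Proposition \ref{p2} using the Herbrand quotient $h_{2/1}(C_{L,S})=[L:K]$, the capitulation sequence of Theorem \ref{t6}, and the ambiguous $S$-class number formula. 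You instead feed $[\Sha_{S}^2]=[U_{K,S}\cap\Nm(L^{\times}):\Nm(U_{L,S})]$, $[H^2(G,I_{L,S})]=e_{\ssstyle{L/K,S}}$, and $[H^2(G,U_{L,S})]=[U_{K,S}:\Nm(U_{L,S})]$ into the four-term sequence and telescope the chain of norm indices; this is cleaner and uniform across $i=1,2$, and it isolates the cohomology of $C_{L,S}$ entirely out of the computation. Two caveats keep your argument honest rather than more economical than the paper's. First, the economy is partly illusory: the paper's proof of the $\Sha_{S}^2$ identification (Theorem \ref{t9}) itself invokes the first formula of the present theorem plus the ambiguous class number formula, so the same ingredients are hidden inside your citation of Theorem \ref{mt2}; your argument is non-circular only because you prove the first formula using just the $\Sha_{S}^1$ identification, and only then use Theorem \ref{mt2}(ii). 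Second, your middle equality $[\Cok_{S}^2]=[C_{K,S}:\Nm(C_{L,S})]$ is obtained by citing the isomorphism stated just before the theorem; that is legitimate, since its proof in the paper ((\ref{e3}) plus Theorem \ref{t7}) does not depend on the present theorem, but it means your route does not replace that part of the paper's argument, only the order computation.
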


As we prove in this paper, the first equality in Theorem \ref{mt4} can be used to establish a generalization of Hilbert's Theorem 94, giving a lower bound on the size of the $S$-capitulation kernel. We also prove a dual result giving a lower bound on the size of the $S$-capitulation cokernel. Both results are also proved in \cite{gon}. As for the second equality in Theorem \ref{mt4}, we note that the $S$-units in $U_{K,S}\cap\Nm(L^{\times})$ coincide with the units that are local norms everywhere outside $S$. Hence, the index $[U_{K,S}:U_{K,S}\cap\Nm(L^{\times})]$ can be computed purely locally, and likewise for the group $\Cok_{S}^2(G,U_{L,S})$.

In section 2, we state and prove some classical results on the group of $S$-ideals, the group of $S$-units, and the group of $S$-id\`{e}le classes. In particular, we prove the $S$-version of Chevalley's ambiguous class number formula. In section 3, we state and prove the main results of this paper. We begin by relating the $S$-id\`{e}le classes to the $S$-capitulation map. We then relate the cohomology of the $S$-id\`{e}le classes to the cohomology of the $S$-units. Finally, we establish a result on the $S$-units and $S$-ideals when $S$ is sufficiently large.

\section{Preliminaries}
Throughout, we fix a finite, cyclic extension $L/K$ of number fields with Galois group $G$. We also fix a finite set $S$ of primes of $K$ that contains the set $S_{\infty}$ of infinite primes. In this setting, we state and prove some preliminary results that will be needed for proving the main theorems.

Let $J_K$ be the group of all fractional ideals of $K$ and $J_{K,S}$ be the group of fractional ideals of $K$ generated by the primes outside $S$. In particular, we have
$$J_K\isom\bigoplus_{v}\Z\;\;\;\textrm{and}\;\;\;J_{K,S}\isom\bigoplus_{v\notin{S}}\Z$$
Let $U_K$ and $\Cl_K$ be, respectively, the unit group and the ideal class group of $K$. These groups are defined by the exactness of the sequence
$$1 \to U_K \to K^{\times} \to J_K \to \Cl_K \to 1$$
where the map $K^{\times}\to J_K$ associates to each $\alpha\in{K^{\times}}$ the principal ideal $(\alpha)\in J_K$. On the other hand, the group of $S$-units of $K$, denoted by $U_{K,S}$, and the $S$-ideal class group of $K$, denoted by $\Cl_{K,S}$, are defined by the exactness of the sequence 
$$1 \to U_{K,S} \to K^{\times} \to J_{K,S} \to \Cl_{K,S} \to 1$$
We denote by $h_K:=[\Cl_K]$ the class number of $K$ and by $h_{K,S}:=[\Cl_{K,S}]$ the $S$-class number of $K$.
There is a natural projection map $J_K\to J_{K,S}$ and applying \cite[Cor 2]{snake} to the composition $K^{\times}\to J_K\to J_{K,S}$, we get the exact sequence
$$1 \to U_K \to U_{K,S} \to \bigoplus_{v\in{S}}\Z \to \Cl_K \to \Cl_{K,S} \to 1$$
We know that $U_K$ is finitely generated, by Dirichlet's Unit Theorem \cite[\S18 Theorem]{global}. Furthermore, the group $\Cl_K$ is finite by \cite[\S17 Theorem]{global}. It follows from the above exact sequence that

\begin{lemma}\label{lem1}
The group $U_{K,S}$ is finitely generated and the group $\Cl_{K,S}$ is finite.
\end{lemma}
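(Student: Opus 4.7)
The plan is to read off both conclusions directly from the five-term exact sequence
\[
1 \to U_K \to U_{K,S} \to \bigoplus_{v\in S}\Z \to \Cl_K \to \Cl_{K,S} \to 1
\]
displayed just above the statement, using only the two facts quoted there: $U_K$ is finitely generated (Dirichlet) and $\Cl_K$ is finite.

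For finiteness of $\Cl_{K,S}$, I would note that exactness at $\Cl_{K,S}$ together with exactness at $\Cl_K$ says that $\Cl_{K,S}$ is the cokernel of the map $\bigoplus_{v\in S}\Z \to \Cl_K$, hence a quotient of $\Cl_K$. Since $\Cl_K$ is finite, so is $\Cl_{K,S}$.

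For finite generation of $U_{K,S}$, I would split the long exact sequence at $\bigoplus_{v\in S}\Z$ into two short exact sequences. Denoting by $M$ the image of the map $U_{K,S}\to\bigoplus_{v\in S}\Z$ (equivalently, the kernel of $\bigoplus_{v\in S}\Z\to\Cl_K$), exactness of the original sequence gives
\[
1 \to U_K \to U_{K,S} \to M \to 1.
\]
Now $M$ is a subgroup of the finitely generated free abelian group $\bigoplus_{v\in S}\Z$, so $M$ is itself finitely generated (free abelian of rank at most $|S|$). Since $U_K$ is finitely generated by Dirichlet's Unit Theorem and an extension of a finitely generated abelian group by another finitely generated abelian group is finitely generated, I conclude that $U_{K,S}$ is finitely generated.

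There is no real obstacle here; the content is entirely in the exact sequence that has already been derived from the snake lemma, and the lemma is a formal consequence of the classical Dirichlet Unit Theorem and the finiteness of the class group. I would keep the proof to two short paragraphs corresponding to the two claims.
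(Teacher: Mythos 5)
Your proof is correct and takes essentially the same approach as the paper, which derives the lemma from exactly the same five-term exact sequence together with Dirichlet's Unit Theorem and the finiteness of $\Cl_K$, stating only that the lemma ``follows from the above exact sequence.'' You have simply made explicit the routine details the paper leaves implicit: $\Cl_{K,S}$ is a quotient of the finite group $\Cl_K$, and $U_{K,S}$ is an extension of a finitely generated free abelian group by the finitely generated group $U_K$.
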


The $S$-id\`{e}le group of $K$ is defined as
$$I_{K,S}=\prod_{v\in{S}}K_{v}^{\times}\times\prod_{v\notin{S}}U_{v}$$
Here, $K_v$ denotes the completion of $K$ at $v$ and $U_v$ is the group of units in $K_v$. Thus $I_{K,S}$ is the group of id\`{e}les of $K$ whose components are units at all primes outside $S$. Let $I_K$ be the full id\`{e}le group of $K$. It is defined as
$$I_K=\bigcup_{S}I_{K,S}$$
where the union is over the set of all finite set $S$ of primes of $K$. The next proposition determines the quotient $I_K/I_{K,S}$.

\begin{proposition}\label{p1}
There is a canonical exact sequence
$$1 \to I_{K,S} \to I_K \xrightarrow{\pi} J_{K,S} \to 1$$
\end{proposition}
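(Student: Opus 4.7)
The plan is to exhibit the map $\pi$ explicitly and then verify injectivity of the inclusion, identification of the kernel, and surjectivity by a direct computation with idèle components. Define $\pi\colon I_K \to J_{K,S}$ on an idèle $(a_v)_v \in I_K$ by
$$\pi((a_v)_v) \;=\; \sum_{v \notin S} \ord_v(a_v)\cdot v,$$
viewed inside $J_{K,S} \isom \bigoplus_{v \notin S}\Z$. This is well-defined on $I_K$ because, by definition of the full idèle group, any $(a_v) \in I_K$ lies in some $I_{K,S'}$, and in particular $a_v \in U_v$ for all but finitely many $v$, so $\ord_v(a_v)=0$ for almost all $v$; hence the sum above is finite. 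It is clearly a group homomorphism.

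Next I would check exactness at the three spots. The leftmost arrow $I_{K,S}\hookrightarrow I_K$ is the tautological inclusion coming from the definition $I_K=\bigcup_{S}I_{K,S}$, so it is injective. For exactness in the middle, an idèle $(a_v)$ lies in $\ker\pi$ if and only if $\ord_v(a_v)=0$ for every $v\notin S$, that is, if and only if $a_v\in U_v$ for every $v\notin S$; by the definition of $I_{K,S}$ this is exactly the condition $(a_v)\in I_{K,S}$. Finally, surjectivity of $\pi$: given a fractional ideal $\mathfrak{a}=\sum_{v\notin S} n_v\cdot v\in J_{K,S}$ (with only finitely many $n_v\neq 0$), choose a uniformizer $\pi_v\in K_v^{\times}$ for each $v\notin S$ with $n_v\neq 0$, and define $a_v=\pi_v^{n_v}$ for such $v$, $a_v=1$ otherwise. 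Then $(a_v)$ belongs to $I_{K,S\cup T}$, where $T$ is the (finite) support of $(n_v)$, hence to $I_K$, and by construction $\pi((a_v))=\mathfrak{a}$.

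There is no real obstacle here: the result is essentially a bookkeeping statement that the ``ideal'' coordinates of an idèle outside $S$ are exactly what $I_{K,S}$ kills. The only mild point worth flagging is that $\pi$ depends on no choice (unlike the section used to prove surjectivity), so the sequence is \emph{canonical}; this is immediate since the assignment $a_v\mapsto \ord_v(a_v)$ is intrinsic to the local field $K_v$ for each $v\notin S$.
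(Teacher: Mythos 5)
Your proof is correct and follows essentially the same route as the paper: define $\pi$ via the valuations $\ord_v$ at primes outside $S$, note the sum/product is finite since idèle components are units almost everywhere, and identify the kernel as $I_{K,S}$. The only difference is that you spell out surjectivity by constructing a preimage from uniformizers, a verification the paper leaves implicit in the word ``onto.''
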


\begin{proof}
Let $v\notin{S}$ be a prime of $K$ and let $\ord_v$ be the normalized valuation of $K_{v}$. For an id\`{e}le $x=(x_v)\in{I_K}$, we have $x_v\in{U_{v}}$ for almost all $v\notin{S}$, so that $\ord_v(x_v)=0$. Hence, for all primes $v\notin{S}$, the map
$$x=(x_v)\to\prod_{v\notin{S}}{v}^{\ord_v(x_v)}$$
defines a canonical homomorphism $\pi$ from $I_K$ onto $J_{K,S}$. Furthermore, $x\in\ker{\pi}$  if and only if $\ord_v(x_v)=0$ for every $v\notin{S}$, i.e., if and only if the components of $x$ are units at all primes outside $S$, thus if and only if $x\in{I_{K,S}}$.
\end{proof}

\begin{theorem}\label{t1}
There is an exact sequence 
$$1 \to U_{K,S} \to I_{K,S} \to C_K \to \Cl_{K,S} \to 1$$
where $C_K$ is the id\`{e}le class group of $K$.
\end{theorem}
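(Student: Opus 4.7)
The plan is to derive the four-term exact sequence by splicing two shorter exact sequences, one of which is Proposition \ref{p1} and the other the defining sequence of $U_{K,S}$. More concretely, consider the composition
$$\iota\colon I_{K,S}\hookrightarrow I_K \twoheadrightarrow C_K = I_K/K^{\times}.$$
I first identify $\ker\iota$ and $\coker\iota$ separately, and then assemble them into the claimed four-term sequence.

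For the kernel, note that $x\in I_{K,S}$ lies in $\ker\iota$ if and only if $x$ is equal, as an id\`ele, to a diagonally embedded element of $K^{\times}$. Such an $x$ must be a unit at every prime $v\notin S$ (since this is true of elements of $I_{K,S}$), hence represents an element of $K^{\times}$ with $\ord_v(x)=0$ for all $v\notin S$. By the defining sequence
$$1 \to U_{K,S} \to K^{\times}\xrightarrow{\phi} J_{K,S} \to \Cl_{K,S}\to 1$$
this is exactly $U_{K,S}$, so $\ker\iota\isom U_{K,S}$. Here $\phi$ is the map $\alpha\mapsto \prod_{v\notin S}v^{\ord_v(\alpha)}$, and it is easily checked that $\phi$ factors as $K^{\times}\hookrightarrow I_K\xrightarrow{\pi}J_{K,S}$, using Proposition \ref{p1}.

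For the cokernel, I would compute
$$\coker\iota \;=\; I_K/(K^{\times}\cdot I_{K,S}).$$
Proposition \ref{p1} gives $I_K/I_{K,S}\isom J_{K,S}$ via $\pi$, and under this isomorphism the subgroup $K^{\times}\cdot I_{K,S}/I_{K,S}$ corresponds to $\pi(K^{\times})=\phi(K^{\times})$. Therefore
$$\coker\iota \;\isom\; J_{K,S}/\phi(K^{\times}) \;=\; \Cl_{K,S},$$
again by the defining sequence for $\Cl_{K,S}$. Splicing the short exact sequence
$$1\to U_{K,S}\to I_{K,S}\xrightarrow{\iota} \im\iota \to 1$$
with $1\to \im\iota \to C_K \to \Cl_{K,S}\to 1$ yields the asserted four-term exact sequence.

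No step is really a major obstacle; the one place that warrants care is the verification of the two equalities $\ker\iota = U_{K,S}$ and $K^{\times}\cdot I_{K,S}/I_{K,S}=\phi(K^{\times})$, since both depend on keeping track of $K^{\times}$ under its diagonal embedding in $I_K$ versus its projection to $J_{K,S}$. One could equivalently package the whole argument as a single application of the snake lemma to the vertical maps $U_{K,S}\hookrightarrow I_{K,S}$, $K^{\times}\hookrightarrow I_K$, $\phi\colon K^{\times}\to J_{K,S}$ fitting over the rows from Proposition \ref{p1} and the defining sequence of $U_{K,S}$; this gives the same result but hides the bookkeeping inside the snake.
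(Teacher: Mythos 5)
Your proof is correct and takes essentially the same route as the paper: the paper applies the snake lemma to the diagram formed by the vertical maps $\beta\colon K^{\times}\to J_{K,S}$ and $\pi\colon I_K\to J_{K,S}$ over the rows $1\to K^{\times}\to I_K\to C_K\to 1$ and $J_{K,S}\xrightarrow{\isom}J_{K,S}$, which packages exactly the kernel/cokernel computation for $I_{K,S}\to C_K$ that you carry out by hand and then splice — an equivalence you yourself point out in your closing remark. No gaps.
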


\begin{proof}
By \cite[VII \S3]{ant}, $K^{\times}$ is diagonally embedded in $I_K$, and the id\`{e}le class group $C_K$ of $K$ is defined by the exactness of the sequence
$$1 \to K^{\times} \to I_K \to C_K \to 1$$
Consider now the commutative diagram
\[
\xymatrix{
1\ar[r] &K^{\times}\ar[r]\ar[d]^{\beta} &I_K\ar[r]\ar[d]^{\pi} &C_K\ar[r]\ar[d] &1\\
1\ar[r] &J_{K,S}\ar[r]^{\isom} &J_{K,S}\ar[r] &1 &}
\]
Snake Lemma yields the exact sequence
$$1 \to \ker(\beta) \to \ker(\pi) \to C_K \to \coker(\beta) \to \coker(\pi) \to 1$$
Substituting the kernels and cokernels of $\beta$ and $\pi$, we obtain the requisite exact sequence.
\end{proof}

Thus, the group $I_{K,S}$ contains the $S$-unit group $U_{K,S}$. The quotient, denoted by
$$C_{K,S}=I_{K,S}/U_{K,S},$$
is called the group of \emph{$S$-id\`{e}le classes} of $K$. It immediately follows from Theorem \ref{t1} that

\begin{theorem}\label{t2}
The group of $S$-id\`{e}le classes of $K$ is contained in the id\`{e}le class group of $K$, and we have an isomorphism
$$\Cl_{K,S}\isom C_K/{C_{K,S}}$$
In particular, we have 
$$[C_K:C_{K,S}]=h_{K,S}$$
\end{theorem}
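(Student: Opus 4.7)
The plan is to extract this statement essentially for free from the four-term exact sequence
$$1 \to U_{K,S} \to I_{K,S} \to C_K \to \Cl_{K,S} \to 1$$
of Theorem \ref{t1}. The strategy is to split this long exact sequence into two short exact sequences at its middle term, and then read off both the inclusion $C_{K,S}\hra C_K$ and the quotient identification.

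Concretely, let $N$ denote the image of the map $I_{K,S}\to C_K$ appearing in Theorem \ref{t1}. Exactness on the left gives the short exact sequence $1\to U_{K,S}\to I_{K,S}\to N\to 1$, from which the definition $C_{K,S}=I_{K,S}/U_{K,S}$ identifies $C_{K,S}$ with $N$, and hence with a subgroup of $C_K$. Exactness on the right gives the short exact sequence $1\to N\to C_K\to \Cl_{K,S}\to 1$, which under the identification $N\isom C_{K,S}$ becomes
$$1 \to C_{K,S} \to C_K \to \Cl_{K,S} \to 1,$$
yielding the asserted isomorphism $\Cl_{K,S}\isom C_K/C_{K,S}$. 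The index statement $[C_K:C_{K,S}]=h_{K,S}$ is then immediate from the finiteness of $\Cl_{K,S}$ provided by Lemma \ref{lem1} together with the definition $h_{K,S}=[\Cl_{K,S}]$.

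There is no real obstacle here; the only bookkeeping point worth mentioning is a compatibility check, namely that the embedding $C_{K,S}\hra C_K$ coming from $N\subset C_K$ agrees with the natural map induced by the inclusion $I_{K,S}\hra I_K$ followed by the projection $I_K\twoheadrightarrow C_K$. Both descriptions send the class of an $S$-id\`ele to its class modulo $K^{\times}$, so they coincide, which also justifies viewing $C_{K,S}$ as a genuine subgroup of $C_K$ rather than merely an abstractly isomorphic group.
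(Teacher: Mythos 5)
Your proof is correct and matches the paper's approach: the paper derives Theorem \ref{t2} as an immediate consequence of the four-term exact sequence in Theorem \ref{t1}, exactly as you do by splitting that sequence at $C_K$. Your added compatibility check (that the embedding $C_{K,S}\hra C_K$ is the natural one induced by $I_{K,S}\hra I_K$) is consistent with the snake-lemma construction in the paper's proof of Theorem \ref{t1}, where the map $I_{K,S}\to C_K$ is precisely the restriction of the projection $I_K\twoheadrightarrow C_K$.
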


The $S$-id\`{e}le group of $L$ is likewise defined as
$$I_{L,S}=\prod_{w\in{S}}L_{w}^{\times}\times\prod_{w\notin{S}}U_{w}=\prod_{v\in{S}}\left(\prod_{w|v}L_{w}^{\times}\right)\times\prod_{v\notin{S}}\left(\prod_{w|v}U_{w}\right)$$
Consequently, we have
$$H^i(G,I_{L,S})\isom\prod_{v\in{S}}H^i(G,\prod_{w|v}L_{w}^{\times})\times\prod_{v\notin{S}}H^i(G,\prod_{w|v}U_{w})$$
The action of $G$ on $\prod_{w|v}L_{w}^{\times}$ permutes the factors , and the subgroup of $G$ that carry a given factor $L_{w}^{\times}$ into itself is the decomposition group $G_{w}$ of $w$. It follows that $\prod_{w|v}L_{w}^{\times}$ is the $G$-module induced by the $G_{w}$-module $L_{w}^{\times}$. Semilocal theory \cite[IX \S1]{ant} then shows that, for any fixed prime $w$ dividing $v$, we have canonical isomorphisms
$$H^i(G,\prod_{w|v}L_{w}^{\times})\isom H^i(G_{w},L_{w}^{\times})$$ and
$$H^i(G,\prod_{w|v}U_{w})\isom H^i(G_{w},U_{w})$$
Note that $H^2(G_{w},L_{w}^{\times})=\Br(L_w/K_v)$ has order $n_{v}=[L_{w}:K_{v}]$ by \cite[\S1, Cor. 2]{lcft}, while $H^1(G_{w},L_{w}^{\times})$ is trivial by Hilbert's Theorem 90. On the other hand, $H^2(G_w,U_w)$ has order $e(v)$, the ramification index of $v$ in $L_w$, by \cite[XI, \S4, Cor to Thm 4]{ant}. Next, the exact sequence
$$1\to U_w \to L_w^{\times} \xrightarrow{\ord_w} \Z \to 1$$
of $G_w$-modules induces the exact sequence
$$1 \to U_v \to K_v^{\times} \xrightarrow{\ord_w} \Z \to H^1(G_w,U_w) \to 1$$
where the $1$ on the right follows from Hilbert's theorem 90. Since the cokernel of the map $K_v^{\times}\xrightarrow{\ord_w}\Z$ has order $e(v)$, it follows that $H^1(G_w,U_w)$ has order $e(v)$ as well. We have thus shown that

\begin{theorem}\label{t3}
There is a canonical isomorphism
$$H^i(G,I_{L,S})\isom\bigoplus_{v\in{S}}H^i(G_{w},L_{w}^{\times})
\times\bigoplus_{v\notin{S}}H^i(G_{w},U_{w})$$
\end{theorem}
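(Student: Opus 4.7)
My plan is to carry out the standard semilocal recipe that the discussion preceding the statement has already sketched, assembling it into three steps. The starting point is the product decomposition
$$I_{L,S}=\prod_{v\in S}\Bigl(\prod_{w\mid v}L_w^{\times}\Bigr)\times\prod_{v\notin S}\Bigl(\prod_{w\mid v}U_w\Bigr)$$
I would apply $H^i(G,-)$ and pull it inside both the outer and inner products. Since $G$ is a finite group, $H^i(G,-)$ can be computed from a finite projective resolution of $\Z$ over $\Z[G]$, so it commutes with arbitrary direct products of $G$-modules. This reduces the problem to identifying $H^i(G,\prod_{w\mid v}L_w^{\times})$ for $v\in S$ and $H^i(G,\prod_{w\mid v}U_w)$ for $v\notin S$, one prime $v$ of $K$ at a time.

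For each $v$, fix a prime $w$ of $L$ above $v$. Because $G$ acts transitively on the set of primes of $L$ above $v$ with stabilizer exactly the decomposition group $G_w$, the products $\prod_{w\mid v}L_w^{\times}$ and $\prod_{w\mid v}U_w$ are canonically the $G$-modules induced from the $G_w$-modules $L_w^{\times}$ and $U_w$, respectively. Shapiro's lemma, which is the content of the semilocal identification cited from \cite[IX \S1]{ant}, then yields
$$H^i\Bigl(G,\prod_{w\mid v}L_w^{\times}\Bigr)\isom H^i(G_w,L_w^{\times}),\qquad H^i\Bigl(G,\prod_{w\mid v}U_w\Bigr)\isom H^i(G_w,U_w)$$
Assembling these over $v$ produces the right-hand side of the theorem as a direct product.

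The final step is to justify replacing this direct product by the direct sum in the statement. At any $v\notin S$ that is unramified in $L/K$, the local extension $L_w/K_v$ is unramified and $U_w$ is cohomologically trivial as a $G_w$-module, so $H^i(G_w,U_w)=0$. Only finitely many primes of $K$ ramify in $L$, so all but finitely many factors of the product over $v\notin S$ vanish, and the product collapses to a direct sum; the product over $v\in S$ is finite to begin with. I do not anticipate a real obstacle here---the only point where a little care is needed is the commutation of $H^i(G,-)$ with the (possibly infinite) product over $v\notin S$, which holds automatically because $G$ is finite.
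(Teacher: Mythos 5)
Your proof is correct and follows essentially the same route as the paper: the decomposition of $I_{L,S}$ into products over primes of $K$, the identification of each factor $\prod_{w\mid v}L_w^{\times}$ (resp.\ $\prod_{w\mid v}U_w$) as the module induced from $G_w$, and the semilocal (Shapiro) isomorphism, with the product-to-sum collapse justified by the cohomological triviality of $U_w$ at the unramified primes outside $S$. Your treatment is in fact slightly more explicit than the paper's on the two delicate points---commuting $H^i(G,-)$ with the infinite product and the passage from direct product to direct sum, which the paper handles only implicitly via the observation that $H^i(G_w,U_w)$ has order $e(v)$.
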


Note that $H^0(G_{w},L_{w}^{\times})=K_{v}$ and $H^0(G_{w},U_{w})=U_{v}$, which immediately imply that

\begin{lemma}\label{l1}
$I_{L,S}^{G}\isom I_{K,S}$
\end{lemma}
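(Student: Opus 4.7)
The plan is to exploit the product decomposition of $I_{L,S}$ together with the semilocal identifications already set up in Theorem \ref{t3}, applied in cohomological degree zero. Since taking $G$-fixed points commutes with arbitrary direct products, I would start by writing
\[
I_{L,S}^{G} \isom \prod_{v\in S}\Bigl(\prod_{w\mid v}L_{w}^{\times}\Bigr)^{G}\times \prod_{v\notin S}\Bigl(\prod_{w\mid v}U_{w}\Bigr)^{G}.
\]
So the entire problem reduces to identifying, for each place $v$ of $K$, the $G$-invariants of the local factor $\prod_{w\mid v}L_{w}^{\times}$ (for $v\in S$) or $\prod_{w\mid v}U_{w}$ (for $v\notin S$).

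Next, I would invoke exactly the semilocal structure described in the paragraph preceding Theorem \ref{t3}: for a fixed $w\mid v$, the $G$-module $\prod_{w\mid v}L_{w}^{\times}$ is induced from the $G_{w}$-module $L_{w}^{\times}$, and similarly $\prod_{w\mid v}U_{w}$ is induced from $U_{w}$. By the degree-zero case of the canonical isomorphism used there (Shapiro's lemma), together with the remark immediately following Theorem \ref{t3} that $H^{0}(G_{w},L_{w}^{\times})=K_{v}^{\times}$ and $H^{0}(G_{w},U_{w})=U_{v}$, I obtain
\[
\Bigl(\prod_{w\mid v}L_{w}^{\times}\Bigr)^{G}\isom K_{v}^{\times}\qquad\text{and}\qquad \Bigl(\prod_{w\mid v}U_{w}\Bigr)^{G}\isom U_{v}.
\]

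Assembling these piece by piece across all $v$ yields
\[
I_{L,S}^{G}\isom \prod_{v\in S}K_{v}^{\times}\times\prod_{v\notin S}U_{v} = I_{K,S},
\]
which is the claimed isomorphism. There is no real obstacle: the substantive input (the induced-module structure and Shapiro's lemma at $i=0$) has already been spelled out in the excerpt, so the lemma is essentially a bookkeeping consequence of Theorem \ref{t3} specialized to degree zero. The only small point worth making explicit is that the isomorphism is the natural one, induced by the diagonal embedding $K_{v}^{\times}\hookrightarrow\prod_{w\mid v}L_{w}^{\times}$ (and similarly for units), which is precisely the map realizing $I_{K,S}$ as a subgroup of $I_{L,S}$.
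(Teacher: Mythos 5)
Your proof is correct and follows essentially the same route as the paper: the paper likewise deduces the lemma immediately from the semilocal (Shapiro) identifications at degree zero, noting $H^0(G_w,L_w^{\times})=K_v^{\times}$ and $H^0(G_w,U_w)=U_v$ and assembling over all places. Your only additions are making explicit that invariants commute with products and that the isomorphism is the natural diagonal one, which are harmless elaborations of the same argument.
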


We denote the order of the groups $H^i$ by $h_i$ (if they are finite) and the Herbrand quotient by $h_{2/1}=h_2/h_1$. It follows from Theorem \ref{t3} that

\begin{theorem}\label{t4}
$$h_{2/1}(I_{L,S})=\prod_{v\in{S}}n_{v}$$
\end{theorem}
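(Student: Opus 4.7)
The plan is to read off the orders of $H^1(G, I_{L,S})$ and $H^2(G, I_{L,S})$ directly from Theorem \ref{t3}, using the local orders already computed in the paragraph preceding that theorem, and then form the ratio.

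First, I would apply Theorem \ref{t3} with $i=1$ and $i=2$ to obtain
$$h_i(I_{L,S}) = \prod_{v \in S} |H^i(G_w, L_w^{\times})| \cdot \prod_{v \notin S} |H^i(G_w, U_w)|,$$
provided each of these products is finite. Finiteness of the product over $v \notin S$ is the only substantive point: since $L/K$ ramifies at only finitely many primes, one has $e(v) = 1$ for almost all $v$, and hence almost all local factors in the infinite products are trivial.

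Next, I would substitute the orders already recorded just before Theorem \ref{t3}. Namely, Hilbert 90 gives $|H^1(G_w, L_w^{\times})| = 1$; local class field theory gives $|H^2(G_w, L_w^{\times})| = n_v$; and the exact sequence $1 \to U_v \to K_v^{\times} \xrightarrow{\ord_w} \Z \to H^1(G_w, U_w) \to 1$ together with $[\Z : \ord_w(K_v^\times)] = e(v)$ yields $|H^1(G_w, U_w)| = e(v)$, while $|H^2(G_w, U_w)| = e(v)$ comes from the ramification computation cited from \cite{ant}.

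Finally, I would combine these: the numerator
$h_2(I_{L,S}) = \prod_{v \in S} n_v \cdot \prod_{v \notin S} e(v)$
and the denominator
$h_1(I_{L,S}) = \prod_{v \in S} 1 \cdot \prod_{v \notin S} e(v)$
share the factor $\prod_{v \notin S} e(v)$, which cancels in the Herbrand quotient, leaving $h_{2/1}(I_{L,S}) = \prod_{v \in S} n_v$, as desired. There is no real obstacle: the argument is essentially bookkeeping once Theorem \ref{t3} and the local order computations are in hand, and the only point requiring a remark is the well-definedness of the infinite products via the almost-unramified property of $L/K$.
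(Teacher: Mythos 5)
Your proposal is correct and follows exactly the route the paper intends: the paper states Theorem \ref{t4} as an immediate consequence of Theorem \ref{t3} together with the local order computations ($|H^1(G_w,L_w^{\times})|=1$, $|H^2(G_w,L_w^{\times})|=n_v$, $|H^i(G_w,U_w)|=e(v)$ for $i=1,2$), and your bookkeeping, including the cancellation of $\prod_{v\notin S}e(v)$ and the remark that almost all these factors are trivial since only finitely many primes ramify, is precisely the argument the paper leaves implicit.
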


By Theorem \ref{t1}, we have an exact sequence of $G$-modules
$$1 \to U_{L,S} \to I_{L,S} \to C_{L,S} \to 1$$
so that, by \cite[\S8, Prop. 10]{aw}, we have
$$h_{2/1}(C_{L,S})=\frac{h_{2/1}(U_{L,S})}{h_{2/1}(I_{L,S})}$$
Substituting from Theorem \ref{t4} and \cite[IX \S5, Cor 2]{ant}, we get

\begin{theorem}\label{t5}
$$h_{2/1}(C_{L,S})=[L:K]$$
\end{theorem}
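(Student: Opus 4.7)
The plan is to chase the short exact sequence defining $C_{L,S}$ through the multiplicative behaviour of the Herbrand quotient, and then to invoke two inputs that have already been assembled in the text: the explicit computation of $h_{2/1}(I_{L,S})$ in Theorem \ref{t4}, and the classical Herbrand index formula for the $S$-units cited from \cite[IX \S5, Cor 2]{ant}.

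First I would note that the defining quotient $C_{L,S} = I_{L,S}/U_{L,S}$ fits into a short exact sequence of $G$-modules
$$1 \to U_{L,S} \to I_{L,S} \to C_{L,S} \to 1,$$
where the $G$-action on $U_{L,S}$ and $I_{L,S}$ is the evident one inherited from the Galois action on $L$ and its completions. Since $G$ is finite cyclic and both $U_{L,S}$ and $I_{L,S}$ have well-defined Herbrand quotients (the finite generation of $U_{L,S}$ was recorded in Lemma \ref{lem1}, and $h_{2/1}(I_{L,S})$ was computed in Theorem \ref{t4}), the multiplicativity of the Herbrand quotient in short exact sequences of \cite[\S8, Prop. 10]{aw} yields
$$h_{2/1}(C_{L,S}) \;=\; \frac{h_{2/1}(I_{L,S})}{h_{2/1}(U_{L,S})}.$$

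Next I would substitute the two known values. Theorem \ref{t4} gives $h_{2/1}(I_{L,S}) = \prod_{v \in S} n_v$. The classical Herbrand unit index formula \cite[IX \S5, Cor 2]{ant}, valid because $S$ contains $S_\infty$ and is stable under the Galois action, gives
$$h_{2/1}(U_{L,S}) \;=\; \frac{1}{[L:K]} \prod_{v \in S} n_v.$$
Taking the ratio, the local degree factors cancel and leave exactly $[L:K]$, which is the claimed identity.

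There is no serious obstacle here: the result is essentially a bookkeeping consequence of two formulas that have been proved or quoted in the preceding paragraphs. The only point that warrants a moment of care is making sure the Herbrand quotient of $U_{L,S}$ referenced from \cite{ant} is applied with the right convention (so that the reciprocal $\prod_{v\in S} n_v / h_{2/1}(U_{L,S})$ produces $[L:K]$ rather than its inverse), and that the short exact sequence used above is indeed one of $G$-modules, which is immediate from the compatibility of the diagonal embedding $L^\times \hookrightarrow I_{L,S}$ with the $G$-action.
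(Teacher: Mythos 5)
Your proof is correct and takes essentially the same route as the paper: the short exact sequence $1 \to U_{L,S} \to I_{L,S} \to C_{L,S} \to 1$ from Theorem \ref{t1}, multiplicativity of the Herbrand quotient via \cite[\S8, Prop.~10]{aw}, and substitution of Theorem \ref{t4} together with $h_{2/1}(U_{L,S}) = \frac{1}{[L:K]}\prod_{v\in S} n_v$ from \cite[IX \S5, Cor 2]{ant}. One small point in your favor: your display $h_{2/1}(C_{L,S}) = h_{2/1}(I_{L,S})/h_{2/1}(U_{L,S})$ has the quotient the right way up, whereas the paper's proof writes this ratio inverted --- an evident typo, since with the substituted values only your orientation yields $[L:K]$ rather than $1/[L:K]$.
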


We now use the id\`{e}le-theoretic results obtained thus far to prove some known ideal-theoretic results. We begin by measuring the failure of Galois descent for the $S$-ideal group $J_{L,S}$.

\begin{lemma}\label{lem3}
$$[J_{L,S}^{G}:J_{K,S}]=\prod_{v\notin{S}}e(v)$$
\end{lemma}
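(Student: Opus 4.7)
The plan is to identify $J_{L,S}^{G}$ and the embedded image of $J_{K,S}$ both very explicitly. First I would note that as a $G$-module,
$$J_{L,S} \;\cong\; \bigoplus_{v\notin S}\Bigl(\bigoplus_{w\mid v}\Z\Bigr),$$
and that $G$ acts transitively on the primes $w\mid v$ (since $L/K$ is Galois), so each inner summand is a permutation module on the orbit $\{w : w\mid v\}$. Taking $G$-invariants then yields
$$J_{L,S}^{G}=\bigoplus_{v\notin S}\Z\cdot\mathfrak{a}_v,\qquad \mathfrak{a}_v:=\sum_{w\mid v}w,$$
because a $G$-invariant divisor must have equal exponents at all $w$ lying above the same $v$.

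Next I would identify the image of $J_{K,S}$ under the natural extension-of-ideals map. A prime $v\notin S$ of $K$ extends to $v\cO_L=\prod_{w\mid v}w^{e(w/v)}$, and in a Galois extension the ramification index $e(w/v)=e(v)$ is independent of the choice of $w\mid v$, so $v\mapsto e(v)\,\mathfrak{a}_v$. Hence the embedding $J_{K,S}\hra J_{L,S}^{G}$ is realized, in these explicit generators, as the diagonal multiplication-by-$e(v)$ map
$$\bigoplus_{v\notin S}\Z \;\longrightarrow\; \bigoplus_{v\notin S}\Z\cdot\mathfrak{a}_v,$$
and its cokernel is $\bigoplus_{v\notin S}\Z/e(v)\Z$, of order $\prod_{v\notin S}e(v)$.

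There is no real obstacle here: the argument reduces to two standard facts, namely transitivity of the $G$-action on primes above a given $v$ and constancy of $e(w/v)$ in a Galois extension. The one thing worth flagging is that $\prod_{v\notin S}e(v)$ is a \emph{finite} product, since $e(v)=1$ for all but the finitely many primes $v$ of $K$ that ramify in $L$, so the index $[J_{L,S}^{G}:J_{K,S}]$ is genuinely well-defined.
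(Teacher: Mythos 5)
Your proof is correct, but it takes a genuinely different route from the paper. You work entirely on the ideal-theoretic side: $J_{L,S}$ is a permutation $G$-module on the primes $w\nmid S$, transitivity of the $G$-action on $\{w:w\mid v\}$ identifies $J_{L,S}^{G}$ with the free group on the orbit sums $\mathfrak{a}_v$, and constancy of $e(w/v)$ in a Galois extension exhibits the extension map as diagonal multiplication by $e(v)$, giving cokernel $\bigoplus_{v\notin S}\Z/e(v)\Z$. The paper instead argues id\`{e}le-theoretically: it takes $G$-cohomology of the exact sequence $1\to I_{L,S}\to I_L\to J_{L,S}\to 1$ from Proposition \ref{p1}, invokes $I_{L,S}^{G}\isom I_{K,S}$, $I_L^G\isom I_K$, and the vanishing of $H^1(G,I_L)$ from global class field theory to obtain $1\to J_{K,S}\to J_{L,S}^{G}\to H^1(G,I_{L,S})\to 1$, and then computes $[H^1(G,I_{L,S})]=\prod_{v\notin S}[H^1(G_w,U_w)]=\prod_{v\notin S}e(v)$ via semilocal theory and local class field theory. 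Your argument is more elementary and more general — it needs no cohomology, no cyclicity, and no class field theory, and it even gives the group structure of the quotient rather than just its order. What the paper's approach buys is the identification $J_{L,S}^{G}/J_{K,S}\isom H^1(G,I_{L,S})$ itself, which ties the ideal-theoretic quotient into the id\`{e}le-cohomological machinery ($\Sha_S^i$, $\Cok_S^i$) that the rest of the paper is built on; your computation yields the numerical formula but not that structural link.
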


\begin{proof} 
By Proposition \ref{p1}, we have an exact sequence
$$1 \to I_{L,S} \to I_L \to J_{L,S} \to 1$$
of $G$-modules. The induced long exact sequence of cohomology yields
$$1 \to I_{L,S}^{G} \to I_L^G \to J_{L,S}^{G} \to H^1(G,I_{L,S}) \to H^1(G,I_L)$$
Note that
\begin{align*}
I_{L,S}^{G} &\isom I_{K,S}\;\; \textrm{by Lemma \ref{l1}}\\
I_L^G &\isom I_K \;\;\;\;\textrm{by \cite[\S7, Prop 7.3]{gcft}}\\
\textrm{and}\;H^1(G,I_L)&\;\;\textrm{is trivial by \cite[\S7, Cor 7.4]{gcft}}
\end{align*}
Hence, the exact sequence above can be given as
$$1 \to J_{K,S} \to J_{L,S}^{G} \to H^1(G,I_{L,S}) \to 1$$
which implies that
\begin{align*}
[J_{L,S}^{G}:J_{K,S}]&=[H^1(G,I_{L,S})]\\
&=\prod_{v\notin{S}}[H^1(G_w,U_w)]\\
&=\prod_{v\notin{S}}e(v)
\end{align*}
\end{proof}

Now note that
$$1 \to U_{L,S} \to L^{\times} \to J_{L,S} \to \Cl_{L,S} \to 1$$
is an exact sequence of $G$-modules. Denoting the image of $L^{\times}\to J_{L,S}$ by $P_{L,S}$, we obtain two short exact sequences of $G$-modules
\begin{equation}\label{eq1}
1 \to U_{L,S} \to L^{\times} \to P_{L,S} \to 1
\end{equation}
and
\begin{equation}\label{eq2}
1 \to P_{L,S} \to J_{L,S} \to \Cl_{L,S} \to 1
\end{equation}
The short exact sequence (\ref{eq2}) induces the long exact sequence
$$1 \to P_{L,S}^G \to J_{L,S}^G \to \Cl_{L,S}^G \to H^1(G,P_{L,S}) \to H^1(G,J_{L,S})$$
By Shapiro's lemma \cite[IX \S2, Lemma 3]{ant}, we have
$$ H^1(G,J_{L,S})\isom\bigoplus_{v\notin{S}}H^1(G_w,\Z)$$
where $G_w$ is the decomposition group at $w|v$. But $H^1(G_w,\Z)=1$, so that
$$H^1(G,J_{L,S})=1$$
Hence, the long exact sequence above reduces to
\begin{equation}\label{eq3}
1 \to J_{L,S}^G/P_{L,S}^G \to \Cl_{L,S}^G \to H^1(G,P_{L,S}) \to 1
\end{equation}
 The ideal classes in $\Cl_{L,S}^{G}$ are known as ambiguous $S$-ideal classes, and denoted by $\Am(L/K,S)$. In particular, for $G=\langle\sigma\rangle$, an ideal class $[\a]\in\Cl_{L,S}$ is ambiguous if $[\a]^{\sigma}=[\a]$, i.e. there exists $\alpha\in{L^{\times}}$ such that ${\a}^{\sigma-1}=(\alpha)$. If $\alpha\in{U_{L,S}}$, i.e. ${\a}^{\sigma-1}=(1)$, we say that the class $[\a]$ is \emph{strongly ambiguous}. The set of all strongly ambiguous $S$-ideal classes is denoted by $\Am_{\st}(L/K,S)$, and it is clear that 
$$\Am_{\st}(L/K,S)=J_{L,S}^G/P_{L,S}^G$$
The next lemma measures the difference between the ambiguous and the strongly ambiguous $S$-ideal classes. This may be considered as the $S$-version of \cite[Prop. 1]{lem}.

\begin{lemma}\label{lem2}
There is an exact sequence 
$$1 \to \Am_{\st}(L/K,S) \to \Am(L/K,S) \to \left(U_{K,S}\cap\Nm(L^{\times})\right)/\Nm(U_{L,S}) \to 1$$
\end{lemma}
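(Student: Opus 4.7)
The plan is to combine the exact sequence (\ref{eq3}) already in hand with a Tate-cohomology computation of $H^1(G,P_{L,S})$. Since (\ref{eq3}) reads
\[
1 \to J_{L,S}^G/P_{L,S}^G \to \Cl_{L,S}^G \to H^1(G,P_{L,S}) \to 1,
\]
and since $J_{L,S}^G/P_{L,S}^G = \Am_{\st}(L/K,S)$ and $\Cl_{L,S}^G = \Am(L/K,S)$ by the discussion preceding the lemma, it suffices to produce a natural isomorphism
\[
H^1(G,P_{L,S}) \;\isom\; \bigl(U_{K,S}\cap\Nm(L^{\times})\bigr)/\Nm(U_{L,S}).
\]

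To get this, I would feed the short exact sequence (\ref{eq1})
\[
1 \to U_{L,S} \to L^{\times} \to P_{L,S} \to 1
\]
into Tate cohomology. Because $G$ is cyclic, Tate cohomology is $2$-periodic, so $H^1(G,P_{L,S}) = \hat{H}^1(G,P_{L,S}) \isom \hat{H}^{-1}(G,P_{L,S})$, and similarly $\hat{H}^{-1}(G,L^{\times}) \isom \hat{H}^1(G,L^{\times}) = 0$ by Hilbert's Theorem 90. The relevant piece of the long exact Tate sequence is therefore
\[
0 \to \hat{H}^{-1}(G,P_{L,S}) \to \hat{H}^0(G,U_{L,S}) \to \hat{H}^0(G,L^{\times}).
\]

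Next I would identify the two Tate $\hat{H}^0$ groups explicitly. On the one hand, $\hat{H}^0(G,L^{\times}) = K^{\times}/\Nm(L^{\times})$. On the other hand, the computation of $U_{L,S}^G$ boils down to showing $U_{L,S}^G = U_{K,S}$: an element of $L^{\times}$ fixed by $G$ lies in $K^{\times}$, and for $v \notin S$ one has $\ord_w(\alpha) = e(w/v)\,\ord_v(\alpha)$, so the $S$-unit conditions on $L$ and on $K$ coincide. Hence $\hat{H}^0(G,U_{L,S}) = U_{K,S}/\Nm(U_{L,S})$, and the connecting map is induced by the inclusion $U_{K,S} \hookrightarrow K^{\times}$. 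Its kernel is precisely the image in the quotient of those $S$-units of $K$ that are norms from $L^{\times}$, i.e.\ $(U_{K,S}\cap\Nm(L^{\times}))/\Nm(U_{L,S})$, yielding the desired isomorphism.

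There is no real obstacle here; the only steps requiring care are the verification that $U_{L,S}^G = U_{K,S}$ (an unramified-place bookkeeping with valuations) and the invocation of $2$-periodicity, both of which are standard given that $G$ is cyclic. Putting the Tate computation together with (\ref{eq3}) then gives the stated three-term exact sequence.
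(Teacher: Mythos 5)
Your proposal is correct and follows essentially the same route as the paper: reduce via the exact sequence (\ref{eq3}) to identifying $H^1(G,P_{L,S})$, then apply the cohomology of (\ref{eq1}) together with Hilbert's Theorem 90 and the $2$-periodicity of cohomology for cyclic $G$ to realize $H^1(G,P_{L,S})$ as the kernel of $U_{K,S}/\Nm(U_{L,S})\to K^{\times}/\Nm(L^{\times})$. The only cosmetic difference is that you shift down to $\hat{H}^{-1}$ and run the Tate sequence through degree $0$, whereas the paper shifts up to $H^2$ and then invokes periodicity; the substance (including your careful check that $U_{L,S}^G=U_{K,S}$, which the paper uses implicitly) is the same.
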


\begin{proof}
By the exact sequence (\ref{eq3}), it suffices to show that
$$H^1(G,P_{L,S})\isom \left(U_{K,S}\cap\Nm(L^{\times})\right)/\Nm(U_{L,S}) $$
The exact sequence (\ref{eq1}) of $G$-modules induces, using Hilbert's Theorem 90, the cohomology sequence
$$1 \to H^1(G,P_{L,S}) \to H^2(G,U_{L,S}) \to H^2(G,L^{\times})$$
Since $G$ is cyclic, periodicity of the cohomology groups implies that the above sequence may also be given as
$$1 \to H^1(G,P_{L,S}) \to U_{K,S}/\Nm(U_{L,S}) \to K^{\times}/\Nm(L^{\times})$$
which immediately gives us the desired isomorphism.
\end{proof}

We have the following ambiguous $S$-class number formula, which may be compared with \cite[Thm. 1]{lem} as well as with \cite[XIII, \S4, Lem. 4.1]{cyclo}.

\begin{theorem}[Ambiguous $S$-class number formula]\label{thm1}
We have
\begin{align*}
[\Am(L/K,S)] &=\frac{h_{K,S}\;\;e_{\ssstyle{L/K,S}}}{[L:K][U_{K,S}:U_{K,S}\cap\Nm(L^{\times})]}\\
[\Am_{\st}(L/K,S)] &=\frac{h_{K,S}\;\;e_{\ssstyle{L/K,S}}}{[L:K][U_{K,S}:\Nm(U_{L,S})]}
\end{align*}
where
$$e_{\ssstyle{L/K,S}}=\prod_{v\in{S}}n_v\;\prod_{v\notin{S}}e(v)$$
with $n_v=[L_w:K_v]$ and $e(v)$ the ramification index of $v$.
\end{theorem}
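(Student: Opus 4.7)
My plan is to focus first on the formula for $[\Am_{\st}(L/K,S)]$, from which the formula for $[\Am(L/K,S)]$ will follow immediately via Lemma \ref{lem2}. Since $\Am_{\st}(L/K,S) = J_{L,S}^G/P_{L,S}^G$ and $P_{K,S} \subset J_{K,S}$ sits inside both $J_{L,S}^G$ and $P_{L,S}^G$, I would decompose
$$[J_{L,S}^G:P_{L,S}^G] = \frac{[J_{L,S}^G:J_{K,S}]\cdot[J_{K,S}:P_{K,S}]}{[P_{L,S}^G:P_{K,S}]}.$$
Here $[J_{L,S}^G:J_{K,S}] = \prod_{v\notin S}e(v)$ by Lemma \ref{lem3}, and $[J_{K,S}:P_{K,S}] = h_{K,S}$ is the definition of the $S$-class number. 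It remains to identify the denominator.

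To compute $[P_{L,S}^G:P_{K,S}]$, I would take $G$-invariants of the sequence (\ref{eq1}) and apply Hilbert's Theorem 90 to obtain the four-term exact sequence
$$1 \to U_{K,S} \to K^\times \to P_{L,S}^G \to H^1(G,U_{L,S}) \to 1,$$
whose middle map has image precisely $P_{K,S}$. This gives $P_{L,S}^G/P_{K,S} \isom H^1(G,U_{L,S})$, reducing everything to computing the order of $H^1(G,U_{L,S})$. This is the main technical step. Since $G$ is cyclic, periodicity yields $H^2(G,U_{L,S}) \isom U_{K,S}/\Nm(U_{L,S})$. To recover $|H^1|$ I would compute the Herbrand quotient: applying multiplicativity of $h_{2/1}$ to $1 \to U_{L,S} \to I_{L,S} \to C_{L,S} \to 1$ and invoking Theorems \ref{t4} and \ref{t5} gives
$$h_{2/1}(U_{L,S}) = \frac{h_{2/1}(I_{L,S})}{h_{2/1}(C_{L,S})} = \frac{\prod_{v\in S}n_v}{[L:K]},$$
so that
$$|H^1(G,U_{L,S})| = \frac{[U_{K,S}:\Nm(U_{L,S})]\cdot[L:K]}{\prod_{v\in S}n_v}.$$

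Substituting back into the decomposition and recognizing $\prod_{v\notin S}e(v)\prod_{v\in S}n_v = e_{\ssstyle{L/K,S}}$ yields the formula for $[\Am_{\st}(L/K,S)]$. For the first formula, I would invoke Lemma \ref{lem2}, which gives
$$[\Am(L/K,S)] = [\Am_{\st}(L/K,S)] \cdot \frac{[U_{K,S}:\Nm(U_{L,S})]}{[U_{K,S}:U_{K,S}\cap\Nm(L^\times)]}.$$
The norm-index factor cancels against $[U_{K,S}:\Nm(U_{L,S})]$ in the denominator of the strongly ambiguous formula, leaving $[U_{K,S}:U_{K,S}\cap\Nm(L^\times)]$ there instead. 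The only genuinely nontrivial step is the Herbrand quotient calculation, but that is already packaged in Theorems \ref{t4} and \ref{t5}; the rest of the argument is careful bookkeeping of indices via the snake/long-exact-sequence machinery already set up in this section.
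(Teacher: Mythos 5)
Your proposal is correct and follows essentially the same route as the paper's proof: reduce to $[\Am_{\st}(L/K,S)]=[J_{L,S}^G:P_{L,S}^G]$ via Lemma \ref{lem2}, split that index over $P_{K,S}\subset J_{K,S}$ using Lemma \ref{lem3} and $[J_{K,S}:P_{K,S}]=h_{K,S}$, identify $P_{L,S}^G/P_{K,S}\isom H^1(G,U_{L,S})$ via Hilbert's Theorem 90, and evaluate $h_1(U_{L,S})$ by a Herbrand quotient computation. The only cosmetic difference is that you obtain $h_{2/1}(U_{L,S})=\prod_{v\in S}n_v/[L:K]$ from Theorems \ref{t4} and \ref{t5}, whereas the paper quotes it directly from \cite[IX \S5, Cor 2]{ant}; since the paper proves Theorem \ref{t5} from that very citation, the two derivations are equivalent.
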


\begin{proof}
 By Lemma \ref{lem2}, it suffices to prove the formula for $$[\Am_{\st}(L/K,S)]=[J_{L,S}^G:P_{L,S}^G]$$
To begin with, by Lemma \ref{lem3}, we have
$$[J_{L,S}^G:J_{K,S}]=\prod_{v\notin{S}}e(v)$$
On the other hand, the long exact sequence of cohomology groups induced by the short exact sequence (\ref{eq1}) gives the exact sequence
$$1 \to P_{K,S} \to P_{L,S}^G \to H^1(G,U_{L,S}) \to 1$$
where the $1$ on the right follows from Hilbert's theorem 90. This implies that
\begin{align*}
[P_{L,S}^G:P_{K,S}]&=h_1(U_{L,S})=\frac{h_2(U_{L,S})}{h_{2/1}(U_{L,S})}\\
&=\frac{[U_{K,S}:\Nm(U_{L,S})][L:K]}{\displaystyle\prod_{v\in{S}}n_v}
\end{align*}
where the last equality follows from \cite[IX \S5, Cor 2]{ant}. Thus we have
\begin{align*}
[\Am_{\st}(L/K,S)]&=[J_{L,S}^G:P_{L,S}^G]\\
&=\frac{[J_{L,S}^G:P_{K,S}]}{[P_{L,S}^G:P_{K,S}]}\\
&=\frac{[J_{L,S}^G:J_{K,S}][J_{K,S}:P_{K,S}]}{[P_{L,S}^G:P_{K,S}]}\\
&=h_{K,S}\;\frac{[J_{L,S}^G:J_{K,S}]}{[P_{L,S}^G:P_{K,S}]}\\
&=h_{K,S}\;\prod_{v\notin{S}}e(v)\;\frac{1}{[L:K][U_{K,S}:\Nm(U_{L,S})]}\;\prod_{v\in{S}}n_v\\
&=\frac{h_{K,S}\;\;e_{\ssstyle{L/K,S}}}{[L:K][U_{K,S}:\Nm(U_{L,S})]}
\end{align*}
\end{proof}

\section{Main Results}
We now prove the main results of this paper. 
We begin with an alternative description of the kernel and cokernel of the $S$-capitulation map.

\begin{theorem}\label{t6}
Suppose that $L/K$ is a finite cyclic extension of number fields with Galois group $G$, and let $S$ be a finite set of primes of $K$ containing the infinite primes. Let $\Cl_{K,S}$ and $\Cl_{L,S}$ denote the $S$-class groups of $K$ and $L$ respectively, and let $C_{K,S}$ and $C_{L,S}$ denote the corresponding $S$-id\`{e}le classes. Then there is an exact sequence
$$1 \to C_{L,S}^{G}/C_{K,S} \to \Cl_{K,S} \xrightarrow{j_{\ssstyle{L/K,S}}} \Cl_{L,S}^{G} \to H^1(G,C_{L,S}) \to 1$$
where $j_{\ssstyle{L/K,S}}$ is the $S$-capitulation map.
\end{theorem}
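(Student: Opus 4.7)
The plan is to obtain the exact sequence by taking $G$-cohomology of the short exact sequence
\begin{equation*}
1 \to C_{L,S} \to C_L \to \Cl_{L,S} \to 1
\end{equation*}
of $G$-modules, which is the $L$-analogue of the isomorphism furnished by Theorem \ref{t2}. The resulting long exact sequence begins
\begin{equation*}
1 \to C_{L,S}^G \to C_L^G \to \Cl_{L,S}^G \to H^1(G,C_{L,S}) \to H^1(G,C_L).
\end{equation*}

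At this point I would invoke two classical results from global class field theory: first, $C_L^G \isom C_K$ (Galois-invariance of id\`ele classes, cf.\ \cite[\S7, Prop 7.3]{gcft}); second, $H^1(G,C_L) = 1$, which is the global ``Hilbert 90'' for id\`ele classes. These two inputs collapse the five-term sequence to
\begin{equation*}
1 \to C_{L,S}^G \to C_K \to \Cl_{L,S}^G \to H^1(G,C_{L,S}) \to 1.
\end{equation*}
Since $I_{K,S} \hookrightarrow I_{L,S}$ and $U_{K,S} \hookrightarrow U_{L,S}$ as $G$-stable subgroups, passing to the quotient gives $C_{K,S} \hookrightarrow C_{L,S}^G$, and this inclusion is compatible with the inclusion $C_K \hookrightarrow C_L$. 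Therefore the map $C_K \to \Cl_{L,S}^G$ vanishes on $C_{K,S}$ and descends to a map $C_K/C_{K,S} \to \Cl_{L,S}^G$. Using the identification $C_K/C_{K,S} \isom \Cl_{K,S}$ from Theorem \ref{t2}, we obtain the four-term sequence
\begin{equation*}
1 \to C_{L,S}^G/C_{K,S} \to \Cl_{K,S} \to \Cl_{L,S}^G \to H^1(G,C_{L,S}) \to 1.
\end{equation*}

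What remains is to verify that the induced map $\Cl_{K,S} \to \Cl_{L,S}^G$ coincides with the $S$-capitulation map $j_{\ssstyle{L/K,S}}$. I expect this to be the main (though purely formal) obstacle: one must trace the connecting isomorphism of Theorem \ref{t2} and the natural inclusion $C_K \hookrightarrow C_L$ explicitly on representatives. Concretely, given an $S$-ideal class $[\a] \in \Cl_{K,S}$, lift it to an id\`ele $\alpha \in I_K$ with $\alpha_v = 1$ for $v \in S$ and $\ord_v(\alpha_v) = \ord_v(\a)$ for $v \notin S$; the class of $\alpha$ in $C_K$ represents $[\a]$ via the identification of Theorem \ref{t2}. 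Under $C_K \hookrightarrow C_L \twoheadrightarrow \Cl_{L,S}$, this class maps to the $S$-ideal class of $\alpha$ computed in $L$, namely $[\a \cO_L]$, which is exactly $j_{\ssstyle{L/K,S}}([\a])$. With that diagram chase in place, the four-term sequence above is precisely the claimed sequence, completing the proof.
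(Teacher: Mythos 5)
Your proposal follows essentially the same route as the paper's proof: the same short exact sequence $1 \to C_{L,S} \to C_L \to \Cl_{L,S} \to 1$, the same two class-field-theoretic inputs ($C_L^G \isom C_K$ and $H^1(G,C_L)=1$), and the same passage to the quotient by $C_{K,S}$ via Theorem \ref{t2}. The only difference is that you explicitly verify, on id\`ele representatives, that the induced map $\Cl_{K,S} \to \Cl_{L,S}^G$ is the capitulation map $j_{\ssstyle{L/K,S}}$ --- a check the paper leaves implicit --- and your verification (the extension of $\a$ to $\cO_L$ picks up exponents $e(w|v)\ord_v(\a)$, matching the image of the embedded id\`ele under $I_L \to J_{L,S}$) is correct.
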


\begin{proof}
By Theorem \ref{t2}, there is an exact sequence
$$1 \to C_{L,S} \to C_L \to \Cl_{L,S} \to 1$$
of $G$-modules. The corresponding long exact sequence of cohomology yields
$$1 \to C_{L,S}^{G} \to C_L^G \to \Cl_{L,S}^{G} \to H^1(G,C_{L,S}) \to H^1(G,C_L)$$
By \cite[\S8, Prop 8.1]{gcft}, we have $C_L^G=C_K$ and, by \cite[\S9, Thm 9.1]{gcft}, we find that $H^1(G,C_L)$ is trivial. We thus get the exact sequence
$$1 \to C_{L,S}^{G} \to C_K \to \Cl_{L,S}^{G} \to H^1(G,C_{L,S}) \to 1$$
which can be modified to the exact sequence
$$1 \to C_{L,S}^{G}/C_{K,S} \to C_K/C_{K,S} \to \Cl_{L,S}^{G} \to H^1(G,C_{L,S}) \to 1$$
By Theorem \ref{t2} again, we have $C_K/C_{K,S}\isom\Cl_{K,S}$ so that the exact sequence becomes
$$1 \to C_{L,S}^{G}/C_{K,S} \to \Cl_{K,S} \xrightarrow{j_{\ssstyle{L/K,S}}} \Cl_{L,S}^{G} \to H^1(G,C_{L,S}) \to 1$$
\end{proof}

Thus we have
\begin{align*}
\ker{j_{\ssstyle{L/K,S}}} &\isom C_{L,S}^{G}/C_{K,S}\\
\coker{j_{\ssstyle{L/K,S}}} &\isom H^1(G,C_{L,S})
\end{align*}
In particular, the $S$-capitulation kernel measures the failure of Galois descent for the $G$-module $C_{L,S}$. If Galois descent holds for $C_{L,S}$, then the group $H^1(G,C_{L,S})$ measures the failure of Galois descent for $\Cl_{L,S}$.

The following corollary is immediate.

\begin{corollary}
The index $[C_{L,S}^{G}:C_{K,S}]$ divides $h_{K,S}$.
\end{corollary}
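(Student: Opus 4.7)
The proof is essentially a one-line corollary of Theorem \ref{t6}, so the plan is purely a matter of extracting the divisibility statement from the exact sequence established there.

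My approach is as follows. The exact sequence in Theorem \ref{t6} begins
$$1 \to C_{L,S}^{G}/C_{K,S} \to \Cl_{K,S} \xrightarrow{j_{\ssstyle{L/K,S}}} \Cl_{L,S}^{G} \to H^1(G,C_{L,S}) \to 1,$$
and its exactness at $\Cl_{K,S}$ identifies $C_{L,S}^{G}/C_{K,S}$ with the kernel of $j_{\ssstyle{L/K,S}}$. In particular, $C_{L,S}^{G}/C_{K,S}$ embeds as a subgroup of $\Cl_{K,S}$. By Lemma \ref{lem1}, the group $\Cl_{K,S}$ is finite of order $h_{K,S}$, so $C_{L,S}^{G}/C_{K,S}$ is also finite and by Lagrange's theorem its index $[C_{L,S}^{G}:C_{K,S}]$ divides $h_{K,S}$.

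There is no real obstacle here, since all the work has already been absorbed into Theorem \ref{t6} and the finiteness of $\Cl_{K,S}$ from Lemma \ref{lem1}; the only thing to verify is that the embedding $C_{L,S}^{G}/C_{K,S}\hookrightarrow\Cl_{K,S}$ makes sense, which is immediate from exactness. Thus the proof will be a single short paragraph citing Theorem \ref{t6} and Lemma \ref{lem1}.
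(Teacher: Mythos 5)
Your proposal is correct and coincides with the paper's intended argument: the corollary is stated there as an immediate consequence of Theorem \ref{t6}, whose exact sequence identifies $C_{L,S}^{G}/C_{K,S}$ with $\ker{j_{\ssstyle{L/K,S}}}\subset\Cl_{K,S}$, so Lagrange's theorem together with the finiteness of $\Cl_{K,S}$ (Lemma \ref{lem1}) gives the divisibility.
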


\begin{proposition}\label{p2}
Under the hypothesis of Theorem \ref{t6}, we have
$$[C_{K,S}:\Nm(C_{L,S})]=\frac{e_{\ssstyle{L/K,S}}}{[U_{K,S}:U_{K,S}\cap\Nm(L^{\times})]}$$
In particular, the index $[C_{K,S}:\Nm(C_{L,S})]$ divides $e_{\ssstyle{L/K,S}}$.
\end{proposition}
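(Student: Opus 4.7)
The plan is to extract $[C_{K,S}:\Nm(C_{L,S})]$ from the long exact sequence attached to the $G$-exact sequence
$$1 \to U_{L,S} \to I_{L,S} \to C_{L,S} \to 1,$$
using periodicity of cohomology for the cyclic group $G$. Under the standard identifications $H^2(G,U_{L,S})=U_{K,S}/\Nm(U_{L,S})$, $H^2(G,I_{L,S})=I_{K,S}/\Nm(I_{L,S})$ (via Lemma \ref{l1}), and $H^2(G,C_{L,S})=C_{L,S}^G/\Nm(C_{L,S})$, the relevant segment reads
$$H^2(G,U_{L,S}) \xrightarrow{\alpha} H^2(G,I_{L,S}) \xrightarrow{\beta} H^2(G,C_{L,S}).$$

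I would first fix the middle term: by Theorem \ref{t3} together with the local class field theoretic computations $[K_v^\times:\Nm(L_w^\times)]=n_v$ and $[U_v:\Nm(U_w)]=e(v)$, the order is $|H^2(G,I_{L,S})|=e_{\ssstyle{L/K,S}}$. Next I would observe that $I_{K,S}\cap U_{L,S}=U_{K,S}$ inside $I_{L,S}$, so the image of $I_{K,S}$ in $C_{L,S}$ is precisely $C_{K,S}$; combined with the containment $\Nm(C_{L,S})\subseteq C_{K,S}$ (because the $G$-norm on $I_{L,S}$ lands in $I_{L,S}^G=I_{K,S}$), this identifies $\Ima(\beta)$ with $C_{K,S}/\Nm(C_{L,S})$. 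Exactness of the sequence at the middle term then gives
$$[C_{K,S}:\Nm(C_{L,S})] = \frac{e_{\ssstyle{L/K,S}}}{|\Ima(\alpha)|}.$$

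Finally I would evaluate $|\Ima(\alpha)|$. A direct computation of $\alpha$ on coset representatives shows $\ker(\alpha)=(U_{K,S}\cap\Nm(I_{L,S}))/\Nm(U_{L,S})$, so $|\Ima(\alpha)|=[U_{K,S}:U_{K,S}\cap\Nm(I_{L,S})]$. The essential step — the main obstacle — is the identification
$$U_{K,S}\cap\Nm(I_{L,S}) = U_{K,S}\cap\Nm(L^\times).$$
The inclusion $\supseteq$ is formal using $\Nm(U_w)=U_v\cap\Nm(L_w^\times)$ at $v\notin S$, which shows that an $S$-unit which is a global norm from $L^\times$ is automatically a local norm from $U_w$ at each $v\notin S$. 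The reverse inclusion is precisely the Hasse Norm Theorem for the cyclic extension $L/K$, asserting that a local norm from $L_w^\times$ at every place of $K$ is a global norm. Substituting yields the asserted formula, and the divisibility $[C_{K,S}:\Nm(C_{L,S})]\mid e_{\ssstyle{L/K,S}}$ is immediate since the left-hand side is a positive integer.
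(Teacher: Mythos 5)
Your proof is correct, but it takes a genuinely different route from the paper's. The paper argues purely by order-counting: it combines the Herbrand quotient $h_{2/1}(C_{L,S})=[L:K]$ (Theorem \ref{t5}) with the capitulation exact sequence of Theorem \ref{t6} to obtain
$$[C_{L,S}^{G}:\Nm(C_{L,S})]=[C_{L,S}^{G}:C_{K,S}]\;[L:K]\;\frac{[\Cl_{L,S}^{G}]}{h_{K,S}},$$
divides by $[C_{L,S}^{G}:C_{K,S}]$, and then substitutes the ambiguous $S$-class number formula (Theorem \ref{thm1}) to convert $[L:K][\Cl_{L,S}^{G}]/h_{K,S}$ into $e_{\ssstyle{L/K,S}}/[U_{K,S}:U_{K,S}\cap\Nm(L^{\times})]$; in that chain the quantity $U_{K,S}\cap\Nm(L^{\times})$ enters through $H^1(G,P_{L,S})$ (Lemma \ref{lem2}), i.e., formally from the principal-ideal sequence, with no appeal to the Hasse Norm Theorem. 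You instead work directly with the degree-two segment of the long exact sequence for $1\to U_{L,S}\to I_{L,S}\to C_{L,S}\to 1$, identify $\Ima(\beta)\isom C_{K,S}/\Nm(C_{L,S})$ and $[\Ima(\alpha)]=[U_{K,S}:U_{K,S}\cap\Nm(I_{L,S})]$, and then invoke the Hasse Norm Theorem (together with the local fact $\Nm(U_w)=U_v\cap\Nm(L_w^{\times})$) to replace $\Nm(I_{L,S})$ by $\Nm(L^{\times})$. What your route buys: it works at the level of the maps rather than just their orders, and it effectively proves the paper's later Theorem \ref{t10} and Corollary \ref{c} in one stroke, since your $\coker(\alpha)$ is by definition $\Cok_{S}^2(G,U_{L,S})$ and your argument exhibits it as $C_{K,S}/\Nm(C_{L,S})$ with the stated order. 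What the paper's route buys: it recycles results already established (Theorems \ref{t5}, \ref{t6}, \ref{thm1}) and never needs Hasse as an input. There is no circularity in your use of it, however, since the Hasse Norm Theorem rests on the same global class-field-theoretic facts ($C_L^{G}=C_K$, $H^1(G,C_L)=1$) that the paper already imports for Theorem \ref{t6}.
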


\begin{proof}
It follows from Theorem \ref{t5} that
$$[C_{L,S}^{G}:\Nm(C_{L,S})]=h_2(C_{L,S})=[L:K]\;h_1(C_{L,S})$$
On the other hand, the exact sequence in Theorem \ref{t6} implies that
$$h_1(C_{L,S})=[C_{L,S}^{G}:C_{K,S}]\;\frac{[\Cl_{L,S}^{G}]}{h_{K,S}}$$
Thus we get
$$[C_{L,S}^{G}:\Nm(C_{L,S})]=[C_{L,S}^{G}:C_{K,S}]\;[L:K]\;\frac{[\Cl_{L,S}^{G}]}{h_{K,S}}$$
Hence,
\begin{align*}
[C_{K,S}:\Nm(C_{L,S})] &=\frac{[C_{L,S}^{G}:\Nm(C_{L,S})]}{[C_{L,S}^{G}:C_{K,S}]}\\
&=[L:K]\;\frac{[\Cl_{L,S}^{G}]}{h_{K,S}}\\
&=\frac{e_{\ssstyle{L/K,S}}}{[U_{K,S}:U_{K,S}\cap\Nm(L^{\times})]}
\end{align*}
where the last equality follows from Theorem \ref{thm1}.
\end{proof}

We now study the cohomology of $U_{L,S}$, the $S$-unit group of $L$. The inclusions 
\begin{align*}
U_{L,S} &\subset L_w^{\times}\;\;\textrm{for}\;v\in{S}\\
\textrm{and}\;U_{L,S} &\subset U_w\;\;\textrm{for}\;v\notin{S}
\end{align*}
imply that, for $i=1,2$, we can define the localization maps
$$\lambda_{S}^i\;:\;H^i(G,U_{L,S})\to\bigoplus_{v\in{S}}H^i(G_w,L_w^{\times})\times\bigoplus_{v\notin{S}}H^i(G_w,U_w)$$
Define
\begin{align*}
\Sha_{S}^i(G,U_{L,S}) &:=\ker{\lambda_{S}^i}\\
\Cok_{S}^i(G,U_{L,S}) &:=\coker{\lambda_{S}^i}
\end{align*}
Theorem \ref{t3} implies that

\begin{lemma}\label{l2}
For $i=1,2$, there is an exact sequence
$$1 \to \Sha_{S}^i(G,U_{L,S}) \to H^i(G,U_{L,S}) \xrightarrow{\lambda_{S}^i} H^i(G,I_{L,S}) \to \Cok_{S}^i(G,U_{L,S}) \to 1$$
\end{lemma}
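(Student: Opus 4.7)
The plan is to recognize Lemma \ref{l2} as essentially a tautological consequence of the definitions of $\Sha_S^i$ and $\Cok_S^i$, once we identify the target of $\lambda_S^i$ with $H^i(G,I_{L,S})$.

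First I would observe that the inclusion $U_{L,S} \hookrightarrow I_{L,S}$ is a morphism of $G$-modules: the components of an $S$-unit are, by definition, units at every prime outside $S$ and lie in $L_w^\times$ at primes in $S$, so the diagonal embedding lands inside $\prod_{w\in S} L_w^\times \times \prod_{w\notin S} U_w = I_{L,S}$, and this embedding is $G$-equivariant. Functoriality of group cohomology therefore yields an induced homomorphism
$$\iota^i \colon H^i(G, U_{L,S}) \longrightarrow H^i(G, I_{L,S})$$
for every $i \geq 0$.

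Next I would invoke Theorem \ref{t3} to identify
$$H^i(G, I_{L,S}) \isom \bigoplus_{v\in S} H^i(G_w, L_w^\times) \;\oplus\; \bigoplus_{v\notin S} H^i(G_w, U_w),$$
and check that under this semi-local identification the map $\iota^i$ coincides with $\lambda_S^i$. This amounts to tracing through the Shapiro isomorphism: the $v$-component of $\iota^i(\xi)$ is obtained by restricting a cocycle representing $\xi$ to $G_w$ and then composing with the projection $U_{L,S} \hookrightarrow \prod_{w'|v} L_{w'}^\times \to L_w^\times$ (resp.\ into $U_w$ for $v \notin S$), which is precisely the localization map defining $\lambda_S^i$. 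Thus $\iota^i = \lambda_S^i$ under the identification of Theorem \ref{t3}.

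With this identification in hand, the desired exact sequence is simply the canonical four-term ``kernel-cokernel'' sequence attached to any homomorphism of abelian groups $f \colon A \to B$, namely
$$1 \to \ker f \to A \xrightarrow{\;f\;} B \to \coker f \to 1,$$
applied to $f = \lambda_S^i$: by definition $\Sha_S^i(G, U_{L,S}) = \ker \lambda_S^i$ and $\Cok_S^i(G, U_{L,S}) = \coker \lambda_S^i$. There is no real obstacle here; the only mild subtlety is the compatibility check that the cohomological map $H^i(G, U_{L,S}) \to H^i(G, I_{L,S})$ agrees, via Shapiro/semilocal theory, with the componentwise localization $\lambda_S^i$, but this is exactly the content of Theorem \ref{t3} combined with the naturality of the Shapiro isomorphism with respect to the $G$-equivariant inclusion $U_{L,S} \hookrightarrow I_{L,S}$.
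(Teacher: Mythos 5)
Your proof is correct and is essentially the paper's own argument: the paper offers no separate proof beyond the remark that Theorem \ref{t3} identifies $H^i(G,I_{L,S})$ with the direct-sum target of $\lambda_{S}^i$, after which the four-term sequence is the tautological kernel--cokernel sequence attached to $\lambda_{S}^i$ together with the definitions of $\Sha_{S}^i$ and $\Cok_{S}^i$. Your extra verification that the map induced by the $G$-equivariant inclusion $U_{L,S}\hookrightarrow I_{L,S}$ agrees with $\lambda_{S}^i$ under the semilocal (Shapiro) isomorphism simply makes explicit the compatibility the paper leaves implicit, and which it relies on later when splitting the long exact sequence of $1\to U_{L,S}\to I_{L,S}\to C_{L,S}\to 1$.
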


By Theorem \ref{t1}, there is an exact sequence of $G$-modules
$$1 \to U_{L,S} \to I_{L,S} \to C_{L,S} \to 1$$
We have $U_{L,S}^{G}=U_{K,S}$, and  Lemma \ref{l1} gives $I_{L,S}^{G}=I_{K,S}$. It follows that the induced long exact sequence of cohomology groups can be given as
\begin{align*}
1 &\to C_{L,S}^{G}/C_{K,S} \to H^1(G,U_{L,S}) \xrightarrow{\lambda_{S}^1} H^1(G,I_{L,S})\to H^1(G,C_{L,S})\\
& \to H^2(G,U_{L,S}) \xrightarrow{\lambda_{S}^2} H^2(G,I_{L,S}) \to H^2(G,C_{L,S})\\
& \to H^3(G,U_{L,S}) \to H^3(G,I_{L,S})
\end{align*}

Since the cohomology of cyclic groups is periodic, we can split up this long exact sequence into three exact sequences as 
\begin{equation}\label{e1}
1 \to C_{L,S}^{G}/C_{K,S} \to H^1(G,U_{L,S}) \xrightarrow{\lambda_{S}^1} H^1(G,I_{L,S}) \to \Cok_{S}^1(G,U_{L,S}) \to 1
\end{equation}

\begin{equation}\label{e2}
1 \to \Cok_{S}^1(G,U_{L,S}) \to H^1(G,C_{L,S}) \to \Sha_{S}^2(G,U_{L,S}) \to 1
\end{equation}

\begin{equation}\label{e3}
1 \to \Cok_{S}^2(G,U_{L,S}) \to H^2(G,C_{L,S}) \to \Sha_{S}^1(G,U_{L,S}) \to 1
\end{equation}

Lemma \ref{l2} and the exact sequence (\ref{e1}) immediately imply that

\begin{theorem}\label{t7}
There is an isomorphism
$$C_{L,S}^{G}/C_{K,S}\isom\Sha_{S}^1(G,U_{L,S})$$
\end{theorem}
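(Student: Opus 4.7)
The plan is to read the desired isomorphism directly off the exact sequence (\ref{e1}) that the excerpt has already established. Recall that (\ref{e1}) is extracted from the long exact sequence of $G$-cohomology attached to
$$1 \to U_{L,S} \to I_{L,S} \to C_{L,S} \to 1,$$
using the identifications $U_{L,S}^G = U_{K,S}$ (definition of $S$-units) and $I_{L,S}^G = I_{K,S}$ (Lemma \ref{l1}) to rewrite the $H^0$-boundary portion as the inclusion $C_{K,S} \hookrightarrow C_{L,S}^G$, whose cokernel $C_{L,S}^G/C_{K,S}$ then injects into $H^1(G, U_{L,S})$.

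The argument itself is bookkeeping. Exactness of (\ref{e1}) at the term $H^1(G, U_{L,S})$ says that the image of the injection $C_{L,S}^G/C_{K,S} \hookrightarrow H^1(G,U_{L,S})$ is precisely the kernel of the next map, namely $\lambda_S^1 : H^1(G, U_{L,S}) \to H^1(G, I_{L,S})$. By the defining equality $\Sha_S^1(G,U_{L,S}) = \ker \lambda_S^1$ recorded in Lemma \ref{l2}, this kernel is exactly $\Sha_S^1(G, U_{L,S})$, giving the desired isomorphism. There is no real obstacle here — the substantive input has already been absorbed into (\ref{e1}), most notably through Lemma \ref{l1}, which is what lets the degree-zero portion of the long exact sequence take the concrete shape $C_{L,S}^G/C_{K,S}$. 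In the write-up I would therefore simply invoke (\ref{e1}) together with the definition of $\Sha_S^1$ in Lemma \ref{l2} and extract the isomorphism in one line.
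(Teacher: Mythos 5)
Your proposal is correct and matches the paper's own argument: the paper likewise deduces Theorem \ref{t7} immediately from the exact sequence (\ref{e1}) together with Lemma \ref{l2}, identifying $C_{L,S}^{G}/C_{K,S}$ with $\ker\lambda_S^1=\Sha_S^1(G,U_{L,S})$ by exactness at $H^1(G,U_{L,S})$. You also correctly note that the real content (the identifications $U_{L,S}^G=U_{K,S}$, $I_{L,S}^G=I_{K,S}$, and the matching of the cohomology map with the localization map) was already absorbed into the derivation of (\ref{e1}), exactly as in the paper.
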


Theorem \ref{t7} and Theorem \ref{t6} imply the following result, which also appears in \cite[\S1 Satz 2]{schmit} for the case $S=S_{\infty}$.

\begin{corollary}
There is an isomorphism
$$\ker{j_{\ssstyle{L/K,S}}}\isom\Sha_{S}^1(G,U_{L,S})$$
\end{corollary}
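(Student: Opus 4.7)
The plan is simply to chain the two isomorphisms that have just been established, since the content of the corollary is exactly the transitivity statement. From the four-term exact sequence in Theorem \ref{t6}, reading off the kernel of $j_{\ssstyle{L/K,S}}$ as the image of the leftmost nontrivial map gives
\[
\ker{j_{\ssstyle{L/K,S}}} \isom C_{L,S}^{G}/C_{K,S}.
\]
On the other hand, Theorem \ref{t7} (obtained from the exact sequence (\ref{e1}) combined with the kernel description in Lemma \ref{l2}) provides the identification
\[
C_{L,S}^{G}/C_{K,S} \isom \Sha_{S}^1(G,U_{L,S}).
\]
Composing these two isomorphisms yields the desired identification $\ker{j_{\ssstyle{L/K,S}}}\isom\Sha_{S}^1(G,U_{L,S})$.

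There is no genuine obstacle here: both ingredients have been proved in the excerpt, and the corollary is a direct transitivity statement. The only thing worth checking is that the isomorphism $C_{L,S}^{G}/C_{K,S}\isom\Sha_{S}^1(G,U_{L,S})$ from Theorem \ref{t7} and the isomorphism $\ker{j_{\ssstyle{L/K,S}}}\isom C_{L,S}^{G}/C_{K,S}$ from Theorem \ref{t6} both arise from the same connecting/boundary construction associated to the exact sequence $1\to U_{L,S}\to I_{L,S}\to C_{L,S}\to 1$, so the composition is canonical. Accordingly, a one-line proof that invokes Theorems \ref{t6} and \ref{t7} suffices.
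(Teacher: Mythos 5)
Your proposal is correct and matches the paper's own argument exactly: the corollary is stated there as an immediate consequence of Theorem \ref{t6} (which identifies $\ker{j_{\ssstyle{L/K,S}}}$ with $C_{L,S}^{G}/C_{K,S}$) and Theorem \ref{t7} (which identifies $C_{L,S}^{G}/C_{K,S}$ with $\Sha_{S}^1(G,U_{L,S})$). Chaining the two isomorphisms, as you do, is precisely the intended one-line proof.
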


We now use the exact sequence (\ref{e1}) to calculate the order of $\Cok_{S}^1(G,U_{L,S})$. We have
\begin{align*}
[\Cok_{S}^1(G,U_{L,S})] &=[C_{L,S}^{G}:C_{K,S}]\;\frac{h_1(I_{L,S})}{h_1(U_{L,S})}\\
&=[C_{L,S}^{G}:C_{K,S}]\;\prod_{v\notin{S}}e(v)\;\frac{h_{2/1}(U_{L,S})}{h_2(U_{L,S})}\\
&=[C_{L,S}^{G}:C_{K,S}]\;\prod_{v\notin{S}}e(v)\;\frac{1}{[U_{K,S}:\Nm(U_{L,S})]}\;\frac{\displaystyle\prod_{v\in{S}}n_v}{[L:K]}
\end{align*}

We have thus proved that

\begin{theorem}\label{t8}
The group $\Cok_{S}^1(G,U_{L,S})$ is finite and its order is given by
$$[\Cok_{S}^1(G,U_{L,S})]=\frac{[C_{L,S}^{G}:C_{K,S}]}{[U_{K,S}:\Nm(U_{L,S})]}\;\frac{e_{\ssstyle{L/K,S}}}{[L:K]}$$
\end{theorem}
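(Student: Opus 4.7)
The natural approach is to extract the order of $\Cok_S^1(G,U_{L,S})$ from the four-term exact sequence (\ref{e1}), which is the initial segment of the long cohomology sequence attached to $1\to U_{L,S}\to I_{L,S}\to C_{L,S}\to 1$ and was recorded above. Assuming for now that all four groups appearing in (\ref{e1}) are finite---this is ensured by Lemma \ref{lem1} together with Theorem \ref{t3}---the alternating product of orders gives
$$[\Cok_S^1(G,U_{L,S})] \;=\; [C_{L,S}^G:C_{K,S}]\cdot\frac{h_1(I_{L,S})}{h_1(U_{L,S})},$$
so the entire task reduces to computing the two local quantities $h_1(I_{L,S})$ and $h_1(U_{L,S})$ and reassembling them into $e_{\ssstyle{L/K,S}}$.

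For the first quantity I would invoke Theorem \ref{t3} to decompose $H^1(G,I_{L,S})$ as a direct sum of local cohomology groups. Hilbert's Theorem 90 kills every factor indexed by $v\in S$, while the orders of the factors $H^1(G_w,U_w)$ for $v\notin S$ were shown earlier in the paper to equal $e(v)$. Hence $h_1(I_{L,S})=\prod_{v\notin S}e(v)$.

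The harder piece is $h_1(U_{L,S})$, which I would extract via the Herbrand quotient by writing $h_1(U_{L,S}) = h_2(U_{L,S})/h_{2/1}(U_{L,S})$. The numerator is $[U_{K,S}:\Nm(U_{L,S})]$ by the standard identification $H^2(G,U_{L,S})\isom U_{L,S}^G/\Nm(U_{L,S})$ for cyclic $G$, combined with $U_{L,S}^G=U_{K,S}$. For the denominator I would apply multiplicativity of Herbrand quotients to $1\to U_{L,S}\to I_{L,S}\to C_{L,S}\to 1$, substituting $h_{2/1}(I_{L,S})=\prod_{v\in S}n_v$ from Theorem \ref{t4} and $h_{2/1}(C_{L,S})=[L:K]$ from Theorem \ref{t5}, to conclude $h_{2/1}(U_{L,S})=\prod_{v\in S}n_v/[L:K]$. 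Substituting these values into the boxed formula and collecting $\prod_{v\in S}n_v\cdot\prod_{v\notin S}e(v)=e_{\ssstyle{L/K,S}}$ yields the stated identity.

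The main obstacle, such as it is, lies in the bookkeeping: one must invert the Herbrand quotient correctly, and must verify finiteness of $h_1(U_{L,S})$ and $h_2(U_{L,S})$ individually---not merely of their ratio---before treating them as ordinary integers. Finiteness of $h_2(U_{L,S})$ follows from the finiteness formula just derived, and finiteness of $h_1(U_{L,S})$ then follows since the Herbrand quotient of the finitely generated $G$-module $U_{L,S}$ is a well-defined rational number. With these precautions in place the argument is a straightforward substitution.
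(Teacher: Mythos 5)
Your proof is correct and follows essentially the same route as the paper's: both extract $[\Cok_{S}^1(G,U_{L,S})]$ from the alternating product of orders in the exact sequence (\ref{e1}), compute $h_1(I_{L,S})=\prod_{v\notin S}e(v)$ via Theorem \ref{t3} and Hilbert's Theorem 90, and evaluate $h_1(U_{L,S})=h_2(U_{L,S})/h_{2/1}(U_{L,S})$ with $h_2(U_{L,S})=[U_{K,S}:\Nm(U_{L,S})]$. The only cosmetic difference is that you recover $h_{2/1}(U_{L,S})=\prod_{v\in S}n_v/[L:K]$ from Theorems \ref{t4} and \ref{t5} by multiplicativity of Herbrand quotients, whereas the paper cites Lang's $S$-unit Herbrand quotient theorem directly; since Theorem \ref{t5} was itself deduced from that theorem, the two computations are equivalent.
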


Suppose that $[L:K]=n$ and $d=\gcd(n,e_{\ssstyle{L/K,S}})$. Let $n'$ and $e'$ be the integers defined by
$$n'=n/d\;\;\textrm{and}\;\;e'={e_{\ssstyle{L/K,S}}}/d$$
By Theorem \ref{t8}, we have
\begin{equation}\label{e4}
n'\;[\Cok_{S}^1(G,U_{L,S})]\;[U_{K,S}:\Nm(U_{L,S})]=e'\;[C_{L,S}^{G}:C_{K,S}]
\end{equation}
Since $\gcd(n',e')=1$, it follows that $n'$ divides the index $[C_{L,S}^{G}:C_{K,S}]=[\ker{j_{\ssstyle{L/K,S}}}]$. We thus obtain the following generalization of Hilbert's Theorem 94, also proved in \cite{gon}.

\begin{theorem}\label{H94}
Assume the hypothesis of Theorem \ref{t6} and suppose that $L/K$ has degree $n$. Then there are at least $n'$ $S$-ideal classes in $K$ that become principal in $L$, where
$$n'=\frac{n}{\gcd(n,e_{\ssstyle{L/K,S}})}$$
In particular, $n'$ divides the $S$-class number $h_{K,S}$.
\end{theorem}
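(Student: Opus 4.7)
The plan is to read off the divisibility from equation (\ref{e4}) and then translate the index $[C_{L,S}^{G}:C_{K,S}]$ into the size of the $S$-capitulation kernel. Everything needed is already in place: equation (\ref{e4}) encodes the order formula of Theorem \ref{t8} after multiplying through by $d = \gcd(n,e_{\ssstyle{L/K,S}})$, and Theorem \ref{t6} gives the crucial identification
\[
[C_{L,S}^{G}:C_{K,S}] = |\ker j_{\ssstyle{L/K,S}}|.
\]

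First, I would inspect the right-hand side of (\ref{e4}). The integer $n'$ obviously divides the left-hand side, hence it divides $e' \cdot [C_{L,S}^{G}:C_{K,S}]$. By the definition of $d$ as $\gcd(n, e_{\ssstyle{L/K,S}})$, the integers $n' = n/d$ and $e' = e_{\ssstyle{L/K,S}}/d$ are coprime. Therefore $n'$ divides $[C_{L,S}^{G}:C_{K,S}]$, and invoking the isomorphism $\ker j_{\ssstyle{L/K,S}} \isom C_{L,S}^{G}/C_{K,S}$ from Theorem \ref{t6}, I would conclude that $n'$ divides the order of the $S$-capitulation kernel. This yields the first assertion: at least $n'$ $S$-ideal classes in $K$ capitulate in $L$.

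For the second assertion, since $\ker j_{\ssstyle{L/K,S}}$ is a subgroup of $\Cl_{K,S}$, its order divides $h_{K,S} = [\Cl_{K,S}]$ by Lagrange's theorem, and the divisibility $n' \mid h_{K,S}$ is immediate.

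There is no real obstacle here; the content is the setup that has already been laid out (particularly Theorem \ref{t8} which produces the mixed product formula, and Theorem \ref{t6} which identifies the index with the capitulation kernel). The only substantive move is the coprimality argument $\gcd(n',e')=1$ that allows $n'$ to be transferred from the left-hand factor to $[C_{L,S}^{G}:C_{K,S}]$ on the right-hand side of (\ref{e4}). I would keep the write-up short, essentially noting equation (\ref{e4}), pointing to the coprimality, applying Theorem \ref{t6}, and finishing with Lagrange.
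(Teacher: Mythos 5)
Your proposal is correct and follows essentially the same route as the paper: the paper likewise reads the divisibility off equation (\ref{e4}) using $\gcd(n',e')=1$, identifies $[C_{L,S}^{G}:C_{K,S}]$ with $[\ker j_{\ssstyle{L/K,S}}]$ via Theorem \ref{t6}, and concludes. The only difference is cosmetic: the paper leaves the final step (that the kernel is a subgroup of $\Cl_{K,S}$, so $n'$ divides $h_{K,S}$) implicit, which you make explicit with Lagrange's theorem.
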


Using the exact sequence (\ref{e2}), Theorem \ref{t6}, and Theorem \ref{t8}, we get
\begin{align*}
[\Sha_{S}^2(G,U_{L,S})] &=\frac{[H^1(G,C_{L,S})]}{[\Cok_{S}^1(G,U_{L,S})]}\\
&=\frac{[\coker{j_{\ssstyle{L/K,S}}}]}{[\ker{j_{\ssstyle{L/K,S}}}]}\;\frac{[L:K][U_{K,S}:\Nm(U_{L,S})]}{e_{\ssstyle{L/K,S}}}\\
&=\frac{[\Am(L/K,S)]}{h_{K,S}}\;\frac{[L:K][U_{K,S}:\Nm(U_{L,S})]}{e_{\ssstyle{L/K,S}}}\\
&=\frac{[\Am(L/K,S)][L:K]}{h_{K,S}\;e_{\ssstyle{L/K,S}}}\;[U_{K,S}:\Nm(U_{L,S})]\\
&=\frac{[U_{K,S}:\Nm(U_{L,S})]}{[U_{K,S}:U_{K,S}\cap\Nm(L^{\times})]}
\end{align*}
where the last equality follows from Theorem \ref{thm1}.This shows that
\begin{theorem}\label{t9}
There is an isomorphism
$$\Sha_{S}^2(G,U_{L,S})\isom U_{K,S}\cap\Nm(L^{\times})/\Nm(U_{L,S})$$
\end{theorem}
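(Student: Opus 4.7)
The plan is to identify $\Sha_S^2(G,U_{L,S})$ as an explicit subgroup of $U_{K,S}/\Nm(U_{L,S})$ via Tate periodicity, exhibit a natural injection of $(U_{K,S}\cap\Nm(L^\times))/\Nm(U_{L,S})$ into it, and then conclude using the order computation displayed immediately before the statement.

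First, by Lemma \ref{l2} one has $\Sha_S^2(G,U_{L,S})=\ker\lambda_S^2$. Since $G$ is cyclic, Tate periodicity gives $H^2(G,U_{L,S})\isom U_{K,S}/\Nm(U_{L,S})$, while Theorem \ref{t3} combined with local periodicity yields
$$H^2(G,I_{L,S})\isom\bigoplus_{v\in S}K_v^{\times}/\Nm(L_w^{\times})\;\oplus\;\bigoplus_{v\notin S}U_v/\Nm(U_w).$$
Under these identifications, $\lambda_S^2$ is induced by the diagonal embedding $U_{K,S}\hookrightarrow I_{K,S}$, so $\Sha_S^2(G,U_{L,S})$ is precisely the subgroup of classes $\bar u\in U_{K,S}/\Nm(U_{L,S})$ for which $u\in\Nm(L_w^\times)$ at every $v\in S$ and $u\in\Nm(U_w)$ at every $v\notin S$.

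Next I would verify the natural inclusion $(U_{K,S}\cap\Nm(L^\times))/\Nm(U_{L,S})\hookrightarrow\Sha_S^2(G,U_{L,S})$. If $u=\Nm_{L/K}(\alpha)$ with $\alpha\in L^\times$, the product formula $u=\prod_{w|v}\Nm_{L_w/K_v}(\alpha_w)$ combined with the coincidence of the local norm groups $\Nm(L_w^\times)$ across all $w|v$ in a Galois extension (conjugate decomposition groups produce the same image in $K_v^\times$) shows $u\in\Nm(L_w^\times)$ for every $v$. For $v\notin S$, the unit condition $u\in U_v$ together with the identity $\ord_v\circ\Nm_{L_w/K_v}=f(w|v)\cdot\ord_w$ forces any local preimage to lie in $U_w$, whence $u\in\Nm(U_w)$. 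The resulting map is injective as a subquotient of $U_{K,S}/\Nm(U_{L,S})$.

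Finally, the order computation carried out in the lines preceding the theorem yields $[\Sha_S^2(G,U_{L,S})]=[U_{K,S}\cap\Nm(L^\times):\Nm(U_{L,S})]$, so the injection above must be an isomorphism. The main obstacle is the local verification in the previous paragraph, which requires articulating both the Galois invariance of the local norm group at a fixed place of $K$ and the compatibility between global and local norms on $S$-units. As an alternative route one could invoke the Hasse Norm Theorem for cyclic extensions, $K^\times\cap\Nm(I_L)=\Nm(L^\times)$, to obtain the opposite inclusion directly and bypass the order count entirely.
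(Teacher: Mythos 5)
Your proposal is correct, and it takes a genuinely different route from the paper. The paper's proof is a pure order count: combining the exact sequence (\ref{e2}) with Theorems \ref{t6} and \ref{t8} and the ambiguous $S$-class number formula (Theorem \ref{thm1}), it computes
$$[\Sha_{S}^2(G,U_{L,S})]=\frac{[U_{K,S}:\Nm(U_{L,S})]}{[U_{K,S}:U_{K,S}\cap\Nm(L^{\times})]}=[U_{K,S}\cap\Nm(L^{\times}):\Nm(U_{L,S})]$$
and concludes. You instead realize both groups as subgroups of $U_{K,S}/\Nm(U_{L,S})$: periodicity identifies $\Sha_{S}^2(G,U_{L,S})$ with the classes of $S$-units that are local norms everywhere (with the integrality refinement $U_v\cap\Nm(L_w^{\times})=\Nm(U_w)$ at $v\notin S$, via $\ord_v\circ\Nm_{L_w/K_v}=f\cdot\ord_w$), and the containment of $\left(U_{K,S}\cap\Nm(L^{\times})\right)/\Nm(U_{L,S})$ in this subgroup is the statement that global norms are local norms. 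This buys something real: equality of orders of two finite abelian groups does not by itself produce an isomorphism (two subgroups of $U_{K,S}/\Nm(U_{L,S})$ of the same order need not be isomorphic), so your explicit natural injection is precisely what is needed to upgrade the paper's count to the stated isomorphism, and the isomorphism you obtain is canonical rather than abstract. Your alternative ending via the Hasse norm theorem, $K^{\times}\cap\Nm(I_L)=\Nm(L^{\times})$ for cyclic extensions, is cleaner still: it gives the reverse inclusion directly, dispenses with counting altogether, and exposes the theorem as the local-global principle for norms restricted to $S$-units; and since Hasse's theorem is itself a consequence of $H^1(G,C_L)=1$, which the paper already invokes in the proof of Theorem \ref{t6}, no deeper input is being smuggled in. Two details to spell out in a final write-up: the periodicity isomorphisms must be chosen compatibly with the localization maps $\lambda_{S}^{i}$ (cup product with a generator of $H^2(G,\Z)$ whose restriction to each $G_w$ generates $H^2(G_w,\Z)$), and, for your primary route, the order count you cite is not part of the statement itself, so it would have to be reconstructed from Theorems \ref{t6}, \ref{t8} and \ref{thm1} --- or bypassed entirely by the Hasse argument.
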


Comparing Theorem \ref{t9} with Lemma \ref{lem2}, we obtain
\begin{corollary}
There is an exact sequence
$$1 \to \Am_{\st}(L/K,S) \to \Am(L/K,S) \to \Sha_{S}^2(G,U_{L,S}) \to 1$$
In particular, the group $\Sha_{S}^2(G,U_{L,S})$ measures the difference between the ambiguous $S$-ideal classes and the strongly ambiguous $S$-ideal classes.
\end{corollary}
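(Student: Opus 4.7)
The plan is to obtain this as a direct substitution: Lemma \ref{lem2} furnishes an exact sequence
$$1 \to \Am_{\st}(L/K,S) \to \Am(L/K,S) \to \left(U_{K,S}\cap\Nm(L^{\times})\right)/\Nm(U_{L,S}) \to 1,$$
while Theorem \ref{t9} provides a canonical isomorphism
$$\Sha_{S}^2(G,U_{L,S}) \isom \left(U_{K,S}\cap\Nm(L^{\times})\right)/\Nm(U_{L,S}).$$
So the first step is simply to splice these two results together: replace the quotient $\left(U_{K,S}\cap\Nm(L^{\times})\right)/\Nm(U_{L,S})$ on the right end of the Lemma \ref{lem2} sequence by the isomorphic group $\Sha_{S}^2(G,U_{L,S})$. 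Exactness is preserved because we are post-composing the surjection of Lemma \ref{lem2} with an isomorphism, which changes neither its kernel nor its image.

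The second clause of the corollary (that $\Sha_{S}^2(G,U_{L,S})$ measures the discrepancy between $\Am(L/K,S)$ and $\Am_{\st}(L/K,S)$) is then immediate from the exactness of the resulting sequence: the cokernel of $\Am_{\st}(L/K,S)\hookrightarrow\Am(L/K,S)$ is exactly $\Sha_{S}^2(G,U_{L,S})$.

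There is essentially no obstacle here, since both constituent results have already been proved in the paper. The only point worth a sentence of care is to note that the map $\Am(L/K,S)\to \Sha_{S}^2(G,U_{L,S})$ in the corollary is the composition of the connecting-style map of Lemma \ref{lem2} with the identification of Theorem \ref{t9}; if one wished to give it an intrinsic description, one could observe that an ambiguous class $[\a]\in \Am(L/K,S)$ with $\a^{\sigma-1}=(\alpha)$ for some $\alpha\in L^{\times}$ is sent to the class of $\Nm(\alpha)\in U_{K,S}\cap\Nm(L^{\times})$ modulo $\Nm(U_{L,S})$, and then to its image under the isomorphism of Theorem \ref{t9}. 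No further computation is required.
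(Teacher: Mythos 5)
Your proposal is correct and is essentially identical to the paper's own argument: the paper deduces the corollary precisely by ``comparing Theorem \ref{t9} with Lemma \ref{lem2}'', i.e.\ by substituting the isomorphic group $\Sha_{S}^2(G,U_{L,S})$ for $\left(U_{K,S}\cap\Nm(L^{\times})\right)/\Nm(U_{L,S})$ at the end of the exact sequence of Lemma \ref{lem2}. Your additional intrinsic description of the resulting map is a nice touch but, as you note, not needed for the statement.
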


Now note that the $S$-capitulation map $j_{\ssstyle{L/K,S}}:\Cl_{K,S}\to\Am(L/K,S)$ induces the map
$$j'_{\ssstyle{L/K,S}}:\Cl_{K,S}\to\Am_{\st}(L/K,S)$$
Applying \cite[Cor 2]{snake} to the composition
$$\Cl_{K,S}\to\Am_{\st}(L/K,S)\to\Am(L/K,S)$$
yields the isomorphism
$$\ker{j'_{\ssstyle{L/K,S}}}\isom\ker{j_{\ssstyle{L/K,S}}}$$
as well as the exact sequence
$$1\to\coker{j'_{\ssstyle{L/K,S}}}\to\coker{j_{\ssstyle{L/K,S}}}\to\Sha_{S}^2(G,U_{L,S})\to 1$$
Comparing this exact sequence with the exact sequence (\ref{e2}) and using Theorem \ref{t6}, we obtain

\begin{theorem}\label{T1}
There is an isomorphism
$$\coker{j'_{\ssstyle{L/K,S}}}\isom\Cok_{S}^1(G,U_{L,S})$$
\end{theorem}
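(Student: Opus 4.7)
The plan is a direct comparison of two short exact sequences, both ending in $\Sha_{S}^2(G,U_{L,S})$. The snake-lemma sequence displayed just before Theorem \ref{T1} reads
$$1 \to \coker j'_{\ssstyle{L/K,S}} \to \coker j_{\ssstyle{L/K,S}} \to \Sha_{S}^2(G,U_{L,S}) \to 1,$$
while sequence (\ref{e2}) reads
$$1 \to \Cok_{S}^1(G,U_{L,S}) \to H^1(G,C_{L,S}) \to \Sha_{S}^2(G,U_{L,S}) \to 1.$$
By Theorem \ref{t6}, $\coker j_{\ssstyle{L/K,S}} \isom H^1(G,C_{L,S})$. If these two sequences can be fit into a commutative ladder with this isomorphism in the middle and the identity on the right-hand term, the five lemma will automatically produce the required isomorphism on the left.

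To build the ladder, I would first unwind the two right-hand surjections explicitly. The top surjection is the composite
$$\coker j_{\ssstyle{L/K,S}} = \Am(L/K,S)/\im j_{\ssstyle{L/K,S}} \twoheadrightarrow \Am(L/K,S)/\Am_{\st}(L/K,S) \isom \Sha_S^2(G,U_{L,S}),$$
where the final isomorphism combines Lemma \ref{lem2} with Theorem \ref{t9}. The bottom surjection is the connecting map $H^1(G,C_{L,S}) \to H^2(G,U_{L,S})$ attached to the short exact sequence $1 \to U_{L,S} \to I_{L,S} \to C_{L,S} \to 1$, which by Lemma \ref{l2} lands precisely in $\Sha_S^2(G,U_{L,S})$.

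The main obstacle is verifying that these two surjections agree under the identification of Theorem \ref{t6}. I would carry this out by a diagram chase starting from an ambiguous class $[\a] \in \Cl_{L,S}^G$: choosing a representative $\a \in J_{L,S}^G$ and writing $\a^{\sigma-1} = (\alpha)$ with $\alpha \in L^\times$, both routes should send $[\a]$ to the class of $\alpha$ in $U_{K,S}/\Nm(U_{L,S}) \isom H^2(G,U_{L,S})$ via the periodicity of cyclic cohomology used in the proof of Lemma \ref{lem2}. Conceptually, the compatibility is a naturality statement for the long exact sequence in cohomology applied to the evident morphism from $1 \to U_{L,S} \to L^\times \to P_{L,S} \to 1$ into $1 \to U_{L,S} \to I_{L,S} \to C_{L,S} \to 1$ (induced by the diagonal embedding $L^\times \hookrightarrow I_L$ and the projection $I_L \twoheadrightarrow J_{L,S}$): the right square commutes because the two connecting maps into $H^2(G,U_{L,S})$ agree by functoriality of the boundary. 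Once this verification is in place, the five lemma immediately yields $\coker j'_{\ssstyle{L/K,S}} \isom \Cok_S^1(G,U_{L,S})$.
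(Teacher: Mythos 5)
Your skeleton is exactly the paper's proof: the paper derives the snake-lemma sequence $1 \to \coker{j'_{\ssstyle{L/K,S}}} \to \coker{j_{\ssstyle{L/K,S}}} \to \Sha_{S}^2(G,U_{L,S}) \to 1$ and then ``compares'' it with sequence (\ref{e2}) via Theorem \ref{t6}. To your credit, you recognize what the paper leaves implicit: such a comparison is only valid if the two surjections onto $\Sha_{S}^2(G,U_{L,S})$ are compatible with the middle isomorphism. Without that, isomorphic middle terms and identical quotients do not determine the kernels (for instance, $\Z/4$ and $\Z/2\times\Z/2$ both sit inside $\Z/4\times\Z/2$ with quotient $\Z/2$). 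So flagging this as the main obstacle is the right instinct, and filling it would make the argument more complete than the printed proof.

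However, your proposed justification of the compatibility has a genuine flaw: the ``evident morphism'' from $1 \to U_{L,S} \to L^{\times} \to P_{L,S} \to 1$ into $1 \to U_{L,S} \to I_{L,S} \to C_{L,S} \to 1$ does not exist. The diagonal embedding sends $L^{\times}$ into $I_L$, not into $I_{L,S}$: an element of $L^{\times}$ that is not an $S$-unit has nonzero valuation at some place outside $S$, hence does not lie in $I_{L,S}$; for the same reason there is no natural map $P_{L,S} \to C_{L,S}$. So plain naturality of connecting maps cannot be invoked. What does exist is the $3\times 3$ commutative diagram with exact rows and columns built from $(U_{L,S}, L^{\times}, P_{L,S})$, $(I_{L,S}, I_L, J_{L,S})$, and $(C_{L,S}, C_L, \Cl_{L,S})$, and the compatibility you need is the standard anticommutativity (agreement up to sign) of the two iterated connecting homomorphisms $\Cl_{L,S}^{G} \to H^2(G,U_{L,S})$, one passing through $H^1(G,P_{L,S})$ and the other through $H^1(G,C_{L,S})$; the sign is harmless when comparing kernels. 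Your explicit cocycle chase can also be made to work, but as written it contains two slips: an ambiguous class in general has \emph{no} representative in $J_{L,S}^{G}$ (that is precisely what separates $\Am(L/K,S)$ from $\Am_{\st}(L/K,S)$; you must take $\a \in J_{L,S}$ with ${\a}^{\sigma-1}=(\alpha)$ merely principal), and the common image in $U_{K,S}/\Nm(U_{L,S}) \isom H^2(G,U_{L,S})$ is the class of $\Nm_{L/K}(\alpha)$, up to sign, not the class of $\alpha$, which need not even lie in $K$. With either repair, your ladder-and-five-lemma argument goes through.
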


We now use the exact sequence (\ref{e3}) and Theorem \ref{t7} to obtain
\begin{align*}
[\Cok_{S}^2(G,U_{L,S})]&=\frac{[H^2(G,C_{L,S})]}{[\Sha_{S}^1(G,U_{L,S})]}\\
&=\frac{[C_{L,S}^{G}:\Nm(C_{L,S})]}{[C_{L,S}^{G}:C_{K,S}]}
\end{align*}
This proves that

\begin{theorem}\label{t10}
There is an isomorphism
$$\Cok_{S}^2(G,U_{L,S})\isom C_{K,S}/\Nm(C_{L,S})$$
\end{theorem}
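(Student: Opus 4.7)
The plan is to read off the isomorphism directly from the exact sequence (\ref{e3})
$$1 \to \Cok_{S}^2(G,U_{L,S}) \to H^2(G,C_{L,S}) \to \Sha_{S}^1(G,U_{L,S}) \to 1,$$
after identifying each of the three terms in a compatible way. Since $G$ is cyclic, periodicity of Tate cohomology gives a canonical identification
$$H^2(G,C_{L,S}) \;\isom\; \widehat{H}^0(G,C_{L,S}) \;=\; C_{L,S}^{G}/\Nm(C_{L,S}).$$
On the other hand, Theorem \ref{t7} supplies $\Sha_{S}^1(G,U_{L,S}) \isom C_{L,S}^{G}/C_{K,S}$. So provided the map on the right of (\ref{e3}) is (under these identifications) the natural surjection $C_{L,S}^{G}/\Nm(C_{L,S}) \twoheadrightarrow C_{L,S}^{G}/C_{K,S}$, its kernel is exactly $C_{K,S}/\Nm(C_{L,S})$, which is what we want.

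Two small verifications are therefore required. First, one must check the containment $\Nm(C_{L,S}) \subseteq C_{K,S}$; this is an easy consequence of Lemma \ref{l1}, because if $[x]\in C_{L,S}$ is represented by an id\`ele $x\in I_{L,S}$, then $\Nm(x)=\prod_{\sigma\in G}\sigma(x)$ is $G$-invariant and so lies in $I_{L,S}^{G}=I_{K,S}$, whose image in $C_{L,S}$ is $C_{K,S}$. Second, one must trace through the construction of the sequence (\ref{e3}) to confirm that, under the periodicity isomorphism $H^2\isom \widehat{H}^0$ and the isomorphism of Theorem \ref{t7}, the connecting arrow $H^2(G,C_{L,S})\to H^3(G,U_{L,S})\isom H^1(G,U_{L,S})\supseteq\Sha^1_S$ really is the quotient map modulo $C_{K,S}$. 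This is a formal compatibility: the isomorphism in Theorem \ref{t7} was itself obtained from the long exact sequence attached to $1\to U_{L,S}\to I_{L,S}\to C_{L,S}\to 1$ (shifted by one via periodicity), so the connecting map in degree $2\to 3$ is by construction the same connecting map that in degree $0\to 1$ gave Theorem \ref{t7}, and the identification is automatic.

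Once both points are in hand, the theorem follows by a one-line diagram chase in (\ref{e3}). As a consistency check, the numerical identity computed just before the statement,
$$[\Cok_{S}^2(G,U_{L,S})] \;=\; \frac{[C_{L,S}^{G}:\Nm(C_{L,S})]}{[C_{L,S}^{G}:C_{K,S}]} \;=\; [C_{K,S}:\Nm(C_{L,S})],$$
agrees with the order of the alleged kernel, which is reassuring.

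The main obstacle is really just the second verification above: confirming that the map $H^2(G,C_{L,S})\to\Sha_{S}^1(G,U_{L,S})$ in (\ref{e3}) is the natural projection after the identifications, rather than some twist of it. Since both the periodicity isomorphism and the isomorphism of Theorem \ref{t7} arise from the \emph{same} long exact sequence that produced (\ref{e3}), this compatibility is a formal property of the standard Tate cohomology machinery for cyclic groups, and no further content is needed beyond what has already been established in the paper.
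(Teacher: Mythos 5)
Your proposal follows essentially the same route as the paper: both arguments rest on the exact sequence (\ref{e3}), the periodicity identification $H^2(G,C_{L,S})\isom \widehat{H}^{0}(G,C_{L,S})=C_{L,S}^{G}/\Nm(C_{L,S})$, and Theorem \ref{t7}. The one real difference is the final step. The paper merely compares orders, computing
$$[\Cok_{S}^2(G,U_{L,S})]=\frac{[H^2(G,C_{L,S})]}{[\Sha_{S}^1(G,U_{L,S})]}=\frac{[C_{L,S}^{G}:\Nm(C_{L,S})]}{[C_{L,S}^{G}:C_{K,S}]}=[C_{K,S}:\Nm(C_{L,S})],$$
and then asserts the isomorphism; you instead identify the surjection in (\ref{e3}) as the natural projection $C_{L,S}^{G}/\Nm(C_{L,S})\to C_{L,S}^{G}/C_{K,S}$ and read off its kernel. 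This is a worthwhile refinement: equality of orders of finite abelian groups does not by itself give an isomorphism (compare $\Z/4$ with $\Z/2\times\Z/2$), so the compatibility you verify is precisely what upgrades the paper's order count to the stated isomorphism. Your two verifications are both sound: $\Nm(C_{L,S})\subseteq C_{K,S}$ follows from Lemma \ref{l1} exactly as you say, and the map in (\ref{e3}) is, under periodicity, induced by the same degree-zero connecting homomorphism $\delta\colon C_{L,S}^{G}\to H^1(G,U_{L,S})$ (with kernel $C_{K,S}$, the image of $I_{K,S}$) that produced Theorem \ref{t7}, since cup product with a generator of $H^2(G,\Z)$ commutes with connecting maps up to sign, and signs do not affect kernels or images.
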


Using Proposition \ref{p2}, we calculate the order of $\Cok_{S}^2(G,U_{L,S})$.

\begin{corollary}\label{c}
We have
$$[\Cok_{S}^2(G,U_{L,S})]=\frac{e_{\ssstyle{L/K,S}}}{[U_{K,S}:U_{K,S}\cap\Nm(L^{\times})]}$$
\end{corollary}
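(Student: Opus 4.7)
The plan is essentially a one-step synthesis of two results already available in the excerpt, so I will sketch how the two pieces fit together and flag where the genuine content lies.

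First I would invoke Theorem \ref{t10}, which gives the isomorphism $\Cok_{S}^2(G,U_{L,S})\isom C_{K,S}/\Nm(C_{L,S})$. Taking orders, the desired formula reduces to computing the index $[C_{K,S}:\Nm(C_{L,S})]$. Then I would simply quote Proposition \ref{p2}, which asserts
$$[C_{K,S}:\Nm(C_{L,S})]=\frac{e_{\ssstyle{L/K,S}}}{[U_{K,S}:U_{K,S}\cap\Nm(L^{\times})]},$$
and the corollary follows immediately.

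The substance of the result is therefore located in Proposition \ref{p2}, not in the corollary itself. The key mechanism there is the Herbrand quotient computation for $C_{L,S}$ from Theorem \ref{t5}, combined with the fundamental exact sequence of Theorem \ref{t6} to express $h_1(C_{L,S})$ in terms of $[C_{L,S}^G:C_{K,S}]$ and $[\Cl_{L,S}^G]$, followed by the ambiguous class number formula (Theorem \ref{thm1}) to eliminate $[\Cl_{L,S}^G]=[\Am(L/K,S)]$ in favor of the local norm index. Since all of this is already done, no additional obstacle arises at the corollary level.

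The only mild subtlety worth noting is that the identification $[H^2(G,C_{L,S})]=[C_{L,S}^G:\Nm(C_{L,S})]$ used implicitly in deriving Theorem \ref{t10} (via periodicity of cohomology for the cyclic group $G$) also underlies Proposition \ref{p2}; this consistency is automatic and requires no further verification. I therefore expect the proof to be a two-line argument: apply Theorem \ref{t10}, then apply Proposition \ref{p2}.
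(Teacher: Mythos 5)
Your proposal is correct and follows exactly the paper's own route: the corollary is stated immediately after Theorem \ref{t10} with the remark ``Using Proposition \ref{p2}, we calculate the order of $\Cok_{S}^2(G,U_{L,S})$,'' i.e., the paper likewise combines the isomorphism $\Cok_{S}^2(G,U_{L,S})\isom C_{K,S}/\Nm(C_{L,S})$ with the index formula of Proposition \ref{p2}. Your identification of where the real content lies (the Herbrand quotient from Theorem \ref{t5}, the exact sequence of Theorem \ref{t6}, and the ambiguous $S$-class number formula of Theorem \ref{thm1} feeding into Proposition \ref{p2}) is also accurate.
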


The next result extends \cite[Thm 5.2]{gon}.

\begin{theorem}
Suppose that $L/K$ is a cyclic extension of degree $n$, and that $U_{K,S}\subset\Nm(L^{\times})$. Then
\begin{enumerate}
\item There are at least $e'$ ambiguous $S$-ideal classes in $L$ that do not arise from the $S$-ideal classes in $K$, where
$$e'=\frac{e_{\ssstyle{L/K,S}}}{\gcd(n,e_{\ssstyle{L/K,S}})}$$
\item We have
$$[\Cok_{S}^2(G,U_{L,S})]=[C_{K,S}:\Nm(C_{L,S})]=e_{\ssstyle{L/K,S}}$$
\end{enumerate}
\end{theorem}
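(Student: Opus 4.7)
My plan is to deduce both parts as direct consequences of the hypothesis $U_{K,S}\subset\Nm(L^{\times})$ applied to formulas already established in the paper, paralleling the reasoning used in the proof of Theorem \ref{H94}.

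For part (2), the hypothesis forces $U_{K,S}\cap\Nm(L^{\times})=U_{K,S}$, so the index $[U_{K,S}:U_{K,S}\cap\Nm(L^{\times})]$ equals $1$. Corollary \ref{c} then collapses to $[\Cok_{S}^2(G,U_{L,S})]=e_{\ssstyle{L/K,S}}$, and Theorem \ref{t10} identifies this group with $C_{K,S}/\Nm(C_{L,S})$, yielding both equalities.

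For part (1), I would first use Theorem \ref{thm1}, again combined with $[U_{K,S}:U_{K,S}\cap\Nm(L^{\times})]=1$, to get
$$[\Am(L/K,S)]=\frac{h_{K,S}\;e_{\ssstyle{L/K,S}}}{n}.$$
Since $[\Ima(j_{\ssstyle{L/K,S}})]=h_{K,S}/[\ker(j_{\ssstyle{L/K,S}})]$ and $\coker(j_{\ssstyle{L/K,S}})=\Am(L/K,S)/\Ima(j_{\ssstyle{L/K,S}})$, dividing gives
$$n\cdot[\coker(j_{\ssstyle{L/K,S}})]=[\ker(j_{\ssstyle{L/K,S}})]\cdot e_{\ssstyle{L/K,S}}.$$
Setting $d=\gcd(n,e_{\ssstyle{L/K,S}})$, $n'=n/d$, and $e'=e_{\ssstyle{L/K,S}}/d$, cancelling $d$ on both sides produces
$$n'\cdot[\coker(j_{\ssstyle{L/K,S}})]=[\ker(j_{\ssstyle{L/K,S}})]\cdot e'.$$
Because $\gcd(n',e')=1$, this forces $e'\mid[\coker(j_{\ssstyle{L/K,S}})]$, so at least $e'$ ambiguous $S$-ideal classes fail to arise from $\Cl_{K,S}$.

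I do not expect a serious obstacle: both parts reduce to arithmetic manipulations of indices already computed in the paper. The only subtlety is the coprimality step at the end of part (1), which is the exact dual to the divisibility argument used to establish Theorem \ref{H94}, so the method is already in hand. As a sanity check, one could alternatively derive the same identity via the exact sequence (\ref{e2}) together with Theorems \ref{t8} and \ref{t9} (the latter simplifying to $\Sha_{S}^2(G,U_{L,S})\isom U_{K,S}/\Nm(U_{L,S})$ under the hypothesis), which should give the same order for $[\coker(j_{\ssstyle{L/K,S}})]=[H^1(G,C_{L,S})]$.
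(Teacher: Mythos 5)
Your proposal is correct. Part (2) is exactly the paper's argument: the hypothesis forces $[U_{K,S}:U_{K,S}\cap\Nm(L^{\times})]=1$, so Corollary \ref{c} gives the order $e_{\ssstyle{L/K,S}}$ and Theorem \ref{t10} (equivalently Proposition \ref{p2}) supplies the identification with $C_{K,S}/\Nm(C_{L,S})$. For part (1), however, your main line of argument takes a genuinely different intermediate route. The paper stays inside the unit-cohomology framework: the hypothesis simplifies Theorem \ref{t9} to $\Sha_{S}^2(G,U_{L,S})\isom U_{K,S}/\Nm(U_{L,S})$; the exact sequence (\ref{e2}) together with Theorem \ref{t6} then gives $[\coker{j_{\ssstyle{L/K,S}}}]=[\Cok_{S}^1(G,U_{L,S})]\,[U_{K,S}:\Nm(U_{L,S})]$; and substituting this into equation (\ref{e4}) yields $n'\,[\coker{j_{\ssstyle{L/K,S}}}]=e'\,[C_{L,S}^{G}:C_{K,S}]$. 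You instead work entirely at the level of ideal classes: the ambiguous $S$-class number formula (Theorem \ref{thm1}), with the norm index collapsing to $1$, combined with counting orders along the four-term exact sequence of Theorem \ref{t6}, gives $n\,[\coker{j_{\ssstyle{L/K,S}}}]=e_{\ssstyle{L/K,S}}\,[\ker{j_{\ssstyle{L/K,S}}}]$ directly. Since $[\ker{j_{\ssstyle{L/K,S}}}]\isom C_{L,S}^{G}/C_{K,S}$ by Theorem \ref{t6}, your identity and the paper's coincide after cancelling $d=\gcd(n,e_{\ssstyle{L/K,S}})$, and both proofs finish with the identical coprimality step $\gcd(n',e')=1\Rightarrow e'\mid[\coker{j_{\ssstyle{L/K,S}}}]$. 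Your route is the more economical one here, needing neither Theorem \ref{t9} nor the sequence (\ref{e2}), whereas the paper's route keeps the groups $\Cok_{S}^1(G,U_{L,S})$ and $\Sha_{S}^2(G,U_{L,S})$ in play, consistent with its theme of expressing capitulation data through the cohomology of $S$-units; the ``sanity check'' you mention at the end (via (\ref{e2}), Theorem \ref{t8}, and Theorem \ref{t9}) is in fact precisely the paper's own proof.
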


\begin{proof}
Note that (2) follows immediately from Corollary \ref{c} since the hypothesis implies that $U_{K,S}\cap\Nm(L^{\times})=U_{K,S}$. In particular, (2) implies that both $\Cok_{S}^2(G,U_{L,S})$ and $C_{K,S}/\Nm(C_{L,S})$ are determined entirely by the primes that ramify in $L$. 

The same hypothesis also implies, by Theorem \ref{t9}, that 
$$\Sha_{S}^2(G,U_{L,S})\isom U_{K,S}/\Nm(U_{L,S})$$
The exact sequence (\ref{e2}) and Theorem \ref{t6} then imply that
$$[\coker{j_{\ssstyle{L/K,S}}}]=[H^1(G,C_{L,S})]=[\Cok_{S}^1(G,U_{L,S})][U_{K,S}:\Nm(U_{L,S})]$$
The equation (\ref{e4}) can now be given as
$$n'\;[\coker{j_{\ssstyle{L/K,S}}}]=e'\;[C_{L,S}^{G}:C_{K,S}]$$
As before, since $\gcd(n',e')=1$, it follows that $e'$ divides $[\coker{j_{\ssstyle{L/K,S}}}]$. This proves (1).
\end{proof}

Now recall \cite[\S7]{gon} that a nonempty set $S$ of primes of $K$ is \emph{large relative to $L/K$} if $S$ contains all the infinite primes of $K$ and all primes that ramify in $L/K$. In this case, we have the following extension of \cite[Thm 7.1]{gon}.

\begin{theorem}\label{L}
Suppose that $L/K$ is a finite, cyclic extension of number fields of degree $n$ with Galois group $G$, and let $S$ be a finite set of primes of $K$ that is large relative to $L/K$ (as defined above). Then
\begin{enumerate}
\item There is an exact sequence
$$1 \to H^1(G,U_{L,S}) \to \Cl_{K,S} \xrightarrow{j_{\ssstyle{L/K,S}}} \Cl_{L,S}^{G} \to \Sha_{S}^2(G,U_{L,S}) \to 1$$
\item The map 
$$j'_{\ssstyle{L/K,S}}:\Cl_{K,S}\to\Am_{\st}(L/K,S)$$ 
is surjective.
\item The index $[U_{K,S}:\Nm(U_{L,S})]$ is divisible by $e'$, where
$$e'=\frac{\displaystyle\prod_{v\in{S}}n_v}{\gcd(n,\displaystyle\prod_{v\in{S}}n_v)}$$
and $n_v=[L_w:K_v]$.
\end{enumerate}
\end{theorem}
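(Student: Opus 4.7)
The unifying observation is that the hypothesis ``$S$ is large relative to $L/K$'' forces $e(v)=1$ for every $v\notin S$, so that
\[
e_{\ssstyle{L/K,S}}=\prod_{v\in{S}}n_v\prod_{v\notin{S}}e(v)=\prod_{v\in{S}}n_v .
\]
Moreover, applying Theorem~\ref{t3} with $i=1$, the summand $H^1(G_w,L_w^\times)$ vanishes by Hilbert 90 for $v\in S$, while $H^1(G_w,U_w)$ has order $e(v)=1$ for $v\notin S$. Hence $H^1(G,I_{L,S})=0$ under the largeness hypothesis. The plan is to feed this single vanishing into the three exact sequences (\ref{e1})--(\ref{e3}), plus Theorems~\ref{t6}, \ref{T1}, and~\ref{t8}, to deduce all three items.

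For part (1), the exact sequence (\ref{e1}) and the vanishing $H^1(G,I_{L,S})=0$ collapse to an isomorphism $C_{L,S}^{G}/C_{K,S}\isom H^1(G,U_{L,S})$ and the vanishing of $\Cok_{S}^1(G,U_{L,S})$. Feeding the latter into (\ref{e2}) yields $H^1(G,C_{L,S})\isom\Sha_{S}^2(G,U_{L,S})$. Splicing these two isomorphisms into the four-term exact sequence of Theorem~\ref{t6} produces exactly the required sequence
\[
1 \to H^1(G,U_{L,S}) \to \Cl_{K,S} \xrightarrow{j_{\ssstyle{L/K,S}}} \Cl_{L,S}^{G} \to \Sha_{S}^2(G,U_{L,S}) \to 1.
\]
Part (2) is then immediate from Theorem~\ref{T1}, which identifies $\coker{j'_{\ssstyle{L/K,S}}}$ with $\Cok_{S}^1(G,U_{L,S})=0$, so $j'_{\ssstyle{L/K,S}}$ is surjective.

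For part (3), the order formula of Theorem~\ref{t8}, combined with $[\Cok_{S}^1(G,U_{L,S})]=1$, becomes
\[
[L:K]\,[U_{K,S}:\Nm(U_{L,S})] = [C_{L,S}^{G}:C_{K,S}]\,e_{\ssstyle{L/K,S}}=[C_{L,S}^{G}:C_{K,S}]\prod_{v\in{S}}n_v .
\]
Writing $E=\prod_{v\in{S}}n_v$, $d=\gcd(n,E)$, $n'=n/d$, $e'=E/d$, this rewrites as
\[
n'\,[U_{K,S}:\Nm(U_{L,S})] = e'\,[C_{L,S}^{G}:C_{K,S}] .
\]
Since $\gcd(n',e')=1$, the integer $e'$ must divide $[U_{K,S}:\Nm(U_{L,S})]$, which is (3).

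I do not expect a serious obstacle: everything is a direct bookkeeping exercise once one notes that largeness of $S$ kills $H^1(G,I_{L,S})$. The only step that requires care is verifying that the maps in the four-term sequence of Theorem~\ref{t6} are compatible with the isomorphisms from (\ref{e1}) and (\ref{e2}) in the expected way, so that the kernel of $j_{\ssstyle{L/K,S}}$ really is identified with $H^1(G,U_{L,S})$ (rather than merely having the same order) and the cokernel with $\Sha_{S}^2(G,U_{L,S})$; this is however clear from the construction of the sequences, since both identifications pass through the common intermediate $C_{L,S}^{G}/C_{K,S}$ and $H^1(G,C_{L,S})$.
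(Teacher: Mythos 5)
Your proposal is correct and takes essentially the same approach as the paper's own proof: both rest on the single observation that largeness of $S$ forces $H^1(G,I_{L,S})=0$ via Theorem~\ref{t3}, and then deduce (1) from the sequences (\ref{e1})--(\ref{e2}) together with Theorem~\ref{t6}, (2) from Theorem~\ref{T1}, and (3) from the order formula of Theorem~\ref{t8} (equivalently, equation (\ref{e4})) with the coprimality argument on $n'$ and $e'$.
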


\begin{proof}
Since $S$ is large relative to $L/K$, we have
$$\prod_{v\notin{S}}e(v)=1$$
so that
$$e_{\ssstyle{L/K,S}}=\prod_{v\in{S}}n_v$$
It then follows from Theorem \ref{t3} that $H^1(G,I_{L,S})$ is trivial. By Lemma \ref{l2}, we have
$$\Sha_{S}^1(G,U_{L,S})\isom H^1(G,U_{L,S})$$
By Theorems \ref{t6} and \ref{t7}, this implies that
$$\ker{j_{\ssstyle{L/K,S}}}\isom H^1(G,U_{L,S})$$
On the other hand, since $\Cok_{S}^1(G,U_{L,S})$ is also trivial, the exact sequence (\ref{e2}) implies that there is an isomorphism
$$H^1(G,C_{L,S})\isom\Sha_{S}^2(G,U_{L,S})$$
By Theorem \ref{t6}, this implies that
$$\coker{j_{\ssstyle{L/K,S}}}\isom\Sha_{S}^2(G,U_{L,S})$$
This proves (1).

Since $\Cok_{S}^1(G,U_{L,S})$ is trivial, Theorem \ref{T1} implies that $\coker{j'_{\ssstyle{L/K,S}}}$ is also trivial. This proves (2).

Finally, since $\Cok_{S}^1(G,U_{L,S})$ is trivial, equation (\ref{e4}) shows that we have
$$n'\;[U_{K,S}:\Nm(U_{L,S})]=e'\;[C_{L,S}^{G}:C_{K,S}]$$
where
$$e'=\frac{e_{\ssstyle{L/K,S}}}{\gcd(n,e_{\ssstyle{L/K,S}})}=\frac{\displaystyle\prod_{v\in{S}}n_v}{\gcd(n,\displaystyle\prod_{v\in{S}}n_v)}$$
It follows that the integer $e'$ divides the index $[U_{K,S}:\Nm(U_{L,S})]$, thus proving (3).
\end{proof}

\newpage
\newcommand{\etalchar}[1]{$^{#1}$}


\begin{thebibliography}{FpS{\etalchar{+}}01}


\bibitem[AW67]{aw}
M.F.~Atiyah and C.T.C.~Wall, \emph{Cohomology of Groups},  in: Algebraic Number Theory (Cassels, J. and
Frohlich, A., ed.), Academic Press, London and New York, 1967, 94--115.

\bibitem[Cas67]{global}
J.W.S.~Cassels, \emph{Global Fields}, in: Algebraic Number Theory (Cassels, J. and
Frohlich, A., ed.), Academic Press, London and New York, 1967, 42--84.



\bibitem[Gon07]{gon}
C.D.~Gonz\'{a}lez-Avil\'{e}s, \emph{Capitulation, ambiguous classes and the cohomology of the units}, J. reine angew. math. 613 (2007), pp 75--97.

\bibitem[Gra03]{gras}
G.~Gras, \emph{Class Field Theory : From Theory to Practice}, Springer-Verlag, 2003.

\bibitem[GW00]{gw}
K.~Gruenberg and A.~Weiss, \emph{Capitulation and transfer kernels}, J. Th\`{e}or. Nombres Bordeaux 12 (2003), no. 1, 219--226.

\bibitem[Hil97]{hilbert}
D.~Hilbert, \emph{Die Theorie der algebraischen Zahlen} (Zahlbericht), Jahresber. DMV 4 (1897), 175--546; English trans.: Springer Verlag 1998.

\bibitem[Lan86]{ant}
S.~Lang, \emph{Algebraic Number Theory}, Springer-Verlag, 1986.

\bibitem[Lan90]{cyclo}
S.~Lang, \emph{Cyclotomic Fields I and II}, Springer, 1990.

\bibitem[Lem11]{snake}
F.~Lemmermeyer, \emph{The Snake Lemma}, available at \\ \url{http://arxiv.org/abs/1108.5684} (2011).

\bibitem[Lem13]{lem}
F.~Lemmermeyer, \emph{The ambiguous class number formula revisited}, Journal of Ramanujan Math. Soc., Vol. 28 Issue 4 (2013), pp 415--421.

\bibitem[Miy89]{miya}
K.~Miyake, \emph{Algebraic investigations of Hilbert's Theorem 94, the principal ideal theorem and the capitulation problem}, Exposition Math. 7 (1989), 289--346.




\bibitem[Sch85]{schmit}
B.~Schmithals, \emph{Kapitulation der Idealklassen und Einheitenstruktur in Zahlk\"{o}rpern}, J. reine angew. math. 358 (1985), pp 43--60.

\bibitem[Ser67]{lcft}
J.P.~Serre, \emph{Local Class Field Theory},  in: Algebraic Number Theory (Cassels, J. and
Frohlich, A., ed.), Academic Press, London and New York, 1967, 128--161.

\bibitem[Suz91]{suzuki}
H.~Suzuki, \emph{A generalization of Hilbert's Theorem 94}, Nagoya Math. J. 121 (1991), 161--169.

\bibitem[Tat67]{gcft}
J.~Tate, \emph{Global Class Field Theory}, in: Algebraic Number Theory (Cassels, J. and
Frohlich, A., ed.), Academic Press, London and New York, 1967, 162--203.





  

\end{thebibliography}
\end{document}